\documentclass[12pt,twoside,reqno]{amsart}
\usepackage[all]{xy}
        \usepackage {amssymb,latexsym,amsthm,amsmath,mathtools}
        \usepackage{enumitem,color}
        
        \topmargin=1.2cm
        \textheight = 8.3in
        \textwidth = 5.8in
        \setlength{\oddsidemargin}{.8cm}
        \setlength{\evensidemargin}{.8cm}

\usepackage{mathtools}

\usepackage[]{biblatex}
\addbibresource{ref.bib}
\long\def\symbolfootnote[#1]#2{\begingroup%
\def\thefootnote{\fnsymbol{footnote}}\footnote[#1]{#2}\endgroup}

\newcommand{\diag}{\textup{diag}}

\makeatletter
\def\imod#1{\allowbreak\mkern10mu({\operator@font mod}\,\,#1)}
\makeatother

\newtheorem{theorem}{Theorem}[section]
\newtheorem{lemma}[theorem]{Lemma}
\newtheorem{corollary}[theorem]{Corollary}
\newtheorem{proposition}[theorem]{Proposition}
\newtheorem*{theorem*}{Theorem}
\theoremstyle{definition}
\newtheorem{definition}[theorem]{Definition}
\newtheorem{remark}[theorem]{Remark}
\newtheorem{example}[theorem]{Example}
\numberwithin{equation}{section}

\newcommand{\ignore}[1]{}

\newcommand{\mynote}[1]{}
\begin{document}
\setcounter{section}{0}
\title{Surjectivity of polynomial maps on Matrices}
\author[Panja S.]{Saikat Panja}
\address{Harish-Chandra Research Institute Prayagraj (Allahabad), Uttar Pradesh 211019, India}
\email{panjasaikat300@gmail.com}
\author[Saini P.]{Prachi Saini}
\address{IISER Pune, Dr. Homi Bhabha Road, Pashan, Pune 411 008, India}
\email{prachi2608saini@gmail.com}
\author[Singh A.]{Anupam Singh}
\address{IISER Pune, Dr. Homi Bhabha Road, Pashan, Pune 411 008, India}
\email{anupamk18@gmail.com}
\thanks{The first-named author is supported by the HRI PDF-M scholarship. The second-named author acknowledges the support of CSIR PhD scholarship number 09/0936(1237)/2021-EMR-I. The third-named author is funded by an NBHM research grant 02011/23/2023/NBHM(RP)/RDII/5955 for this research.}
\subjclass[2020]{Primary: 16S50,11P05, Secondary: 16K20}
\today
\keywords{Polynomial maps, Diagonal polynomial, Matrix algebra}


\begin{abstract}
For $n\geq 2$, we consider the map on $M_n(\mathbb K)$ given by evaluation of a polynomial $f(X_1, \ldots, X_m)$ over the field $\mathbb K$. In this article, we explore the image of the diagonal map given by $f=\delta_1 X_1^{k_1} + \delta_2 X_2^{k_2} + \cdots +\delta_m X_m^{k_m}$ in terms of the solution of certain equations over $\mathbb K$. In particular, we show that for $m\geq 2$, the diagonal map is surjective when (a) $\mathbb K= \mathbb C$, (b) $\mathbb K= \mathbb F_q$ for large enough $q$. Moreover, when $\mathbb K= \mathbb R$ and $m=2$ it is surjective except when $n$ is odd, $k_1, k_2$ are both even, and $\delta _1\delta_2>0$ (in that case the image misses negative scalars), and the map is surjective for $m\geq 3$. We further show that on $M_n(\mathbb H)$ the diagonal map is surjective for $m\geq 2$, where $\mathbb H$ is the algebra of Hamiltonian quaternions.    
\end{abstract}

\maketitle

\section{Introduction}
Let $\mathcal A$ be an associative algebra over a field $\mathbb K$. Let $f(X_1, \ldots, X_m)$ be a polynomial over $\mathbb K$ in non-commuting variables. Such a polynomial defines a map $\omega \colon \mathcal A^m \rightarrow \mathcal A$ given by evaluation $(x_1, x_2, \ldots,x_m) \mapsto f(x_1, \ldots, x_m)$. These maps are called polynomial maps, and a fundamental question is understanding their image. In recent years this problem has attracted a lot of attention including in the context of groups, algebras and Lie algebras etc. (e.g. see the articles~\cite{BelovKunyavskiiPlotkin2013, BelovMalevRowenYavich2020} ). Several deep results have been proved for word maps on finite simple groups (see the articles~\cite{LarsenShalev2009, LarsenShalevTiep2011, LarsenShalevTipe2019}). In this article, we look for certain analogous results for matrix algebras. Here we deal with a particular map, namely given by diagonal polynomials. 

Given integers $k_1, k_2, \ldots, k_m \geq 1$, and $\delta_1, \ldots, \delta_m \in \mathbb K$ all non-zero, we consider the diagonal polynomial
\begin{equation}\label{multi-form} 
f(X_1,\ldots, X_m)=\delta_1 X_1^{k_1} + \delta_2 X_2^{k_2} + \cdots +\delta_m X_m^{k_m}
\end{equation}
in $m$-variables. We call the corresponding map $\omega$ a {\bf diagonal map}. The main question here is to understand how big the image is. In particular, we would like to see if this map is surjective and find the smallest $m$ with this property. When $k_1=\ldots = k_m = k$ the Equation~\ref{multi-form} is a $k$-form. Such a $k$-form is said to be {\bf universal} on $\mathcal A$ if the map $\omega$ is surjective. Also, the well-known matrix Waring problem considers $\delta_1= \delta_2 = \cdots = \delta_m = 1$ for a $k$-form and asks for the smallest $m$ so that the form is universal, i.,e., $\omega$ is surjective. The universality of the quadratic form ($k=2$ case) over a field is a well-studied problem including the arithmetic aspects (e.g. in the work of Bhargava~\cite{Bhargava2000}, \cite{BhargavaCremonaFisher2016}). Voloch looked at diagonal forms over function fields in~\cite{Voloch1985}. We are interested in looking at this problem when $\mathcal A$ is the matrix algebra $M_n(\mathbb K)$ and obtain results over certain fields such as $\mathbb K=\mathbb C, \mathbb R$ and $\mathbb F_q$ (also over the skew-field $\mathbb H$). The Waring problem is well-studied for matrix algebra see Richman \cite{Richman1987}, Vaserstein \cite{Vaserstein1987}, Katre \cite{KatreKrishnamurthi2022}, \cite{KatreWadikar2021}, Garge  \cite{BaraiGarge2022}, \cite{Garge2021}, 
Bresar-Semrl (\cite{Bresar2020}, \cite{BresarSemerl2022}, \cite{BresarSemerl2021}) etc. More general problems about the images of polynomial maps on algebras are being considered by Kanel-Below, Yavich, Kunyavskii, Rowen etc. (see \cite{BandmanGordeevKunyavaskii2012}, \cite{BelovMalevRowen2017}
,\cite{BelovMalevRowen2016}, \cite{BelovMalevRowen2012}, \cite{Lee2018}, \cite{PanjaPrasad2023}). For a survey of recent results one can look into \cite{BelovKunyavskiiPlotkin2013} and  \cite{BelovMalevRowenYavich2020}.

Motivated by the results obtained for the Waring-like problem in the case of non-abelian finite simple groups (see \cite{LarsenShalev2009}, \cite{LarsenShalevTiep2011} and \cite{LarsenShalevTipe2019}), Larsen asked the question (see \cite{Kishore2022}) if every element of $M_n(\mathbb K)$ can be written as a sum of two $k$-th powers for ``large enough'' $\mathbb K$. This question is answered in the affirmative in a series of two papers~\cite{Kishore2022} and~\cite{KishoreSingh2022} over finite fields $\mathbb F_q$. In particular, in~\cite{KishoreSingh2022} it is shown that (see Theorem 1.1) there exists a constant $\mathcal N(k)$ (which depends only on $k$) such that for all $q> \mathcal N(k)$ the map given by $X^k + Y^k$ is surjective on $M_n(\mathbb F_q)$. 

In this article, we obtain more general results about the image of the diagonal map $\omega$ given by the polynomial in Equation~\ref{multi-form} on $M_n(\mathbb K)$ such as when invertible matrices and nilpotent matrices are in the image depending on if we have solutions to certain equations (which are usually $k$-forms) over the underlying field $\mathbb K$. These results are collected in Section~\ref{diagonal-word}. With the help of these, we obtain the following results in Section~\ref{diagonal-fieldC}, and \ref{diagonal-fieldreal} over various fields: 
\begin{theorem}\label{theoremB}
Let $m$ be a positive integer and $n\geq 2$. Given integers $k_1, k_2, \ldots, k_m \geq 1$, and $\delta_1, \ldots, \delta_m \in \mathbb K$ all non-zero, consider the diagonal map $\omega \colon M_n(\mathbb K)^m \rightarrow M_n(\mathbb K)$ for $n\geq 2$ given by 
$$
\omega(x_1, \ldots, x_m) = \delta_1 x_1^{k_1} + \delta_2 x_2^{k_2} + \cdots +\delta_m x_m^{k_m}.$$
Then we have the following:
\begin{enumerate}
\item When $\mathbb K=\mathbb C$, the map $\omega$ is surjective for all $m\geq 2$. 
\item When $\mathbb K=\mathbb F_q$ and $m\geq 2$, there exists a constant $\mathcal N$ which depends on $k_1, \ldots, k_m$ such that for all $q >  \mathcal N$ the map $\omega$ is surjective. 
\item When $\mathbb K=\mathbb R$ and $m=2$, the map $\omega$ is surjective except when $n$ is odd, $ \delta_1\delta_2 > 0$ and $k_1, k_2$ are both even (in that case the image misses negative scalar matrices). It is surjective for $m\geq 3$.  
\item On $M_n(\mathbb H)$ the map $\omega$ is surjective for all $m\geq 2$.
\end{enumerate}
\end{theorem}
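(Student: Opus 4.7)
The plan is to apply the auxiliary results of Section~\ref{diagonal-word}, which should characterize, in terms of solvability of certain diagonal $k$-form equations over $\mathbb{K}$, when the image of $\omega$ contains all invertible matrices and all nilpotent matrices in $M_n(\mathbb{K})$. Combined with the Jordan-block methodology of Section~\ref{reduction-jordan}, this ought to reduce the theorem to verifying the solvability of finitely many equations of the shape $\delta_1 a_1^{k_1} + \cdots + \delta_m a_m^{k_m} = c$, together with mild side conditions such as invertibility of some $a_i$, over each of the three target fields. With the heavy lifting already done in Section~\ref{diagonal-word}, Theorem~\ref{theoremB} becomes a case-by-case check of these input hypotheses.

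Over $\mathbb{C}$, the verification is immediate: every scalar admits a $k$-th root for any $k\geq 1$, so any diagonal $k$-form equation is solvable by fixing all variables but one and extracting a $k_j$-th root of what remains. Moreover, every invertible matrix in $M_n(\mathbb{C})$ is itself a $k$-th power (via Jordan form, using that a unipotent block has a $k$-th root by a terminating binomial-series argument). So the hypotheses of the reduction are trivially met and part (1) follows.

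Over $\mathbb{R}$, the assumption that some $k_j$ is odd means that $x \mapsto \delta_j x^{k_j}$ is already a bijection of $\mathbb{R}$; hence $\delta_j a_j^{k_j}$ alone sweeps out all of $\mathbb{R}$, which lets us solve the diagonal $k$-form equation by the same one-free-variable trick. Minor adjustments (invertibility, distinct eigenvalues of a companion matrix, etc.) can be enforced by perturbing within the odd-exponent slot since its image is open. For the Jordan-form piece, using the real Jordan normal form and the fact that odd powers are bijective on $\mathbb{R}$, one recovers $k$-th roots of the invertible pieces exactly as in the $\mathbb{C}$ case.

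The genuine content is the $\mathbb{F}_q$ case, and that is where I expect the main obstacle. Here solvability of $\delta_1 a_1^{k_1}+\cdots+\delta_m a_m^{k_m}=c$ with arbitrary prescribed $c \in \mathbb{F}_q$ follows from Weil-type bounds for diagonal hypersurfaces: the solution count is $q^{m-1}+O(q^{(m-1)/2})$ with implicit constant depending only on $k_1,\ldots,k_m$, so a solution exists once $q>\mathcal{N}(k_1,\ldots,k_m)$. Excluding finitely many degenerate loci (for instance, requiring some $a_i \in \mathbb{F}_q^\times$, or that a certain auxiliary matrix be regular semisimple) removes at most $O(q^{m-2})$ solutions, so such non-degeneracy is automatic in the same range of $q$. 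The main technical point I foresee is matching the precise equations produced by the reduction in Section~\ref{diagonal-word} with what the Weil estimates directly give, and tracking how $\mathcal{N}$ depends on the exponents; this is where the care lies, but it is standard once the reduction is in hand.
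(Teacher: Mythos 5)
Your proposal follows essentially the same route as the paper: reduce via Proposition~\ref{key-prop} to Jordan blocks, use the Section~\ref{diagonal-word} results for invertible blocks (Lemma~\ref{lemma-nonzero-alpha}) and nilpotent blocks (Theorems~\ref{theorem-large-nilpotent}, \ref{nilpotent-n3}, \ref{nilpotent-n2}), reduce to $m=2$ by freezing the remaining variables, and then verify the required scalar solvability conditions field by field (trivially over $\mathbb{C}$, via the odd exponent over $\mathbb{R}$, and via Lang--Weil over $\mathbb{F}_q$). The only point to sharpen is that the hypotheses fed into Section~\ref{diagonal-word} are not mere solvability of $\delta_1 a_1^{k_1}+\cdots+\delta_m a_m^{k_m}=c$ but the existence of \emph{regular} solutions (two solutions with distinct $k_i$-th powers for the invertible case; regular $n$-tuples for the small nilpotent blocks), and that the $\mathcal N$ in part~(2) is made independent of $n$ precisely because the junction-matrix argument (Theorem~\ref{theorem-large-nilpotent}) handles all $J_{0,n}$ with $n>2k_1$ unconditionally, leaving only finitely many small $n$ for the Weil-bound argument --- your remark about ``tracking how $\mathcal N$ depends on the exponents'' is exactly where this split is used.
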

\noindent Since, in general, the power maps need not be surjective, $m=2$ is the smallest possible with the property that $\omega$ could be surjective. This clearly generalises the known results for the Matrix Waring problem in~\cite{KishoreSingh2022} to a more general diagonal map on one hand, and on the other hand, it generalises the work of Richman~\cite{Richman1985, Richman1987}. The result on $\mathbb H$ for $\delta_i\in \mathbb R$ follows quite easily by using the canonical form theory which we mention in Section~\ref{section-quaternion}. We further hope to apply our method to some more polynomial maps and obtain similar results. Throughout the article $I\in M_n(\mathbb K)$ will denote the $n\times n$ identity matrix in $M_n(\mathbb K)$.



\section{Preliminaries}
In this section, we set up a method that reduces the problem of finding a solution to a matrix equation $f(X_1, \ldots X_m)=A$ for all $A \in M_n(\mathbb K)$ to that of Jordan Matrices over extensions of $\mathbb K$. Recall that an irreducible separable polynomial is one which does not have repeated roots over an algebraic closure. 
 We will call a polynomial to be separable if all of its irreducible factors are separable.
\subsection{Reduction to Jordan Matrices}\label{reduction-jordan}

Let $\mathbb K$ be a field and $A\in M_n(\mathbb K)$. We are interested in finding a solution to the equation $f(X_1, X_2, \ldots, X_m) =A$. That is, we want to know if $A$ is in the image of the map $\omega \colon \mathcal A^m \rightarrow \mathcal A$ given by $(x_1, x_2, \ldots,x_m) \mapsto f(x_1, x_2, \ldots, x_m)$. For $P\in GL_n(\mathbb K)$ we can re-write the equation as 
$$f(PX_1P^{-1}, PX_2P^{-1}, \ldots, PX_mP^{-1}) = PAP^{-1},$$ 
thus it is enough to consider a canonical form of $A$ up to similarity. Further, if we have a block diagonal matrix $A=\diag(A_1, \ldots, A_r)$, and have a solution to each $f(X_1, X_2, \ldots, X_m) = A_i$ for $1\leq i \leq r$, then we have a solution to $f(X_1, X_2, \ldots, X_m) = A$ by taking solution as the block diagonal matrices with blocks being the solutions of the earlier equation. 

Further, let $A$ has its characteristic polynomial separable and is given by $\prod_{i=1}^r p_i(x)^{s_i}$ where $p_i$ are irreducible polynomials then a canonical form of $A$ is $\bigoplus\limits_{i=1}^{r}\bigoplus\limits_{j=1}^{t_i} J_{p_i, s_i(j)}$, where $\{s_i(1),\ldots, s_i(t_i)\}$ is a partition of $s_i$. 
This notation will be abbreviated in the subsequent discussion by omitting the limits when they are clear from the context.
The notation $J_{p, l}$ refers to a block matrix with $l$ blocks each of size $d=deg(p(x))$ and the entries as follows:
$$
J_{p, l} = \begin{psmallmatrix} \mathfrak C_p & I & &&&\\ &\mathfrak C_p& I &&& \\  &&\ddots&\ddots&&\\ &&&\ddots&\ddots&\\&&&&\mathfrak C_p&I\\&&&&&\mathfrak C_p\\  \end{psmallmatrix}   \ \  {\text where} \ \ \mathfrak C_p = \begin{psmallmatrix} 0 & 0& 0& \cdots &0& -a_0\\ 1& 0& 0& \cdots &0& -a_1 \\ 0&1&0&0&\cdots&-a_2\\ \vdots&&\ddots&\ddots&\ddots&\vdots \\ 0&\cdots&0&1&0&-a_{d-2}\\ 0&0& 0& \cdots &1&-a_{d-1}\end{psmallmatrix}
$$
is the companion matrix corresponding to $p(x)=x^d+a_{d-1}x^{d-1}+\cdots +a_1x+a_0$. The matrix $J_{p,l}$ is also referred to as a Jordan matrix corresponding to $p(x)$ and $l$. Let $L=\mathbb K(\alpha)$ be a field extension that has a root $\alpha$ of $p(x)$. Then, we consider the map $\mathcal R \colon M_l(L) \rightarrow M_{ld}(\mathbb K)$ induced by the left-multiplication map by $\alpha$ on $L$ over $\mathbb K$. Clearly, this map is a ring homomorphism and maps the matrix $J_{\alpha, l}$ to $J_{p,l}$. Under the homomorphism $\mathcal R$ we can carry forward a solution of $f(X_1, X_2, \ldots, X_m) = J_{\alpha, l}$ to that of $f(X_1, X_2, \ldots, X_m) = J_{p,l}$. This argument can be used to reduce the problem of finding the solution to that of Jordan matrices $J_{\alpha, l}$ for all $\alpha$ in various extensions of $\mathbb K$. We can summarise this as follows:
\begin{proposition}\label{key-prop}
Let $\mathbb K$ be a field and $A\in M_n(\mathbb K)$ with a separable characteristic polynomial. Then, the equation $f(X_1, X_2, \ldots, X_m) = A$ in $M_n(\mathbb K)$ has a solution if $f(X_1, X_2, \ldots, X_m) = J_{\alpha, l}$ has a solution in $M_l(\mathbb K(\alpha))$ for all $\alpha$ which are eigenvalues of $A$ over $\bar{\mathbb K}$. 
\end{proposition}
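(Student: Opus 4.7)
The proof will essentially consolidate the three reduction steps sketched in the paragraphs preceding the statement into one argument. The plan is to reduce first by conjugation, then by block decomposition via the rational canonical form (which is available because the characteristic polynomial is separable), and finally via the companion-matrix/field-extension trick.

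First I would note that for any $P\in\GL_n(\mathbb{K})$, if $(x_1,\ldots,x_m)$ satisfies $f(x_1,\ldots,x_m)=B$, then $(Px_1P^{-1},\ldots,Px_mP^{-1})$ satisfies $f=PBP^{-1}$, since $f$ is a polynomial in non-commuting variables and conjugation is a ring automorphism of $M_n(\mathbb{K})$. Thus solvability of $f(X_1,\ldots,X_m)=A$ depends only on the similarity class of $A$ over $\mathbb{K}$, and I can replace $A$ by any convenient representative.

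Next, since the characteristic polynomial of $A$ is separable, the minimal polynomial equals the characteristic polynomial up to radicals in each factor, and the primary decomposition gives a representative of the similarity class of $A$ in the form $\bigoplus_{i=1}^{r} J_{p_i,n_i}$ where the $p_i$ are the distinct irreducible factors appearing in the characteristic polynomial. If for each $i$ we find a tuple $(x_1^{(i)},\ldots,x_m^{(i)})\in M_{n_i\cdot\deg p_i}(\mathbb{K})^m$ with $f(x_1^{(i)},\ldots,x_m^{(i)})=J_{p_i,n_i}$, then taking $x_j=\diag(x_j^{(1)},\ldots,x_j^{(r)})$ yields $f(x_1,\ldots,x_m)=\bigoplus_i J_{p_i,n_i}$, again because $f$ respects block-diagonal structure. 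So I have reduced the problem to the single Jordan block $J_{p,l}$ for each irreducible factor $p$.

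Finally I would exploit the companion-matrix construction. Fix an irreducible factor $p(x)$ of degree $d$ and a root $\alpha$ of $p$ in $L=\mathbb{K}(\alpha)$. The left-multiplication action of $\alpha$ on $L$ (viewed as a $\mathbb{K}$-vector space of dimension $d$) in a suitable $\mathbb{K}$-basis is represented by the companion matrix $\mathfrak{C}_p$, and this identification extends entrywise to a $\mathbb{K}$-algebra embedding $\mathcal{R}\colon M_l(L)\hookrightarrow M_{ld}(\mathbb{K})$. Because $\mathcal{R}$ is a ring homomorphism it commutes with polynomial evaluation, so
\[
\mathcal{R}\bigl(f(y_1,\ldots,y_m)\bigr)=f\bigl(\mathcal{R}(y_1),\ldots,\mathcal{R}(y_m)\bigr)
\]
for any $y_j\in M_l(L)$. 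Moreover $\mathcal{R}(J_{\alpha,l})=J_{p,l}$ by direct inspection: the diagonal $\alpha$'s map to $\mathfrak{C}_p$ blocks and the scalar $1$'s on the superdiagonal map to the $d\times d$ identity blocks. Hence any solution $(y_1,\ldots,y_m)\in M_l(L)^m$ of $f(Y_1,\ldots,Y_m)=J_{\alpha,l}$ produces a solution $(\mathcal{R}(y_1),\ldots,\mathcal{R}(y_m))\in M_{ld}(\mathbb{K})^m$ of $f(X_1,\ldots,X_m)=J_{p,l}$. Combining this with the earlier two reductions completes the proof.

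The main subtlety to verify carefully is the equality $\mathcal{R}(J_{\alpha,l})=J_{p,l}$ and the fact that $\mathcal{R}$ really is a ring homomorphism (not merely $\mathbb{K}$-linear); both boil down to the observation that the regular representation of $L$ on itself is a faithful $\mathbb{K}$-algebra map, after which the matrix extension $M_l(L)\to M_{ld}(\mathbb{K})$ is automatic. Everything else is formal.
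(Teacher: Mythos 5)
Your proof is correct and follows essentially the same three-step reduction as the paper: conjugation invariance, block-diagonal decomposition into generalized Jordan (hypercompanion) blocks $J_{p_i,n_i}$ using separability of the characteristic polynomial, and transport of solutions from $M_l(\mathbb{K}(\alpha))$ to $M_{ld}(\mathbb{K})$ via the $\mathbb{K}$-algebra embedding $\mathcal{R}$ coming from the regular representation of $\mathbb{K}(\alpha)$. You supply a bit more explicit justification (e.g.\ that $\mathcal{R}$ commutes with polynomial evaluation and that $\mathcal{R}(J_{\alpha,l})=J_{p,l}$) than the paper's prose does, but the argument is the same.
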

\noindent We use this to reduce the problem to solve Equation~\ref{multi-form} to that for Jordan matrices. We demonstrate this by an example.
\begin{example}
Consider $\begin{pmatrix}0&-1&1&0\\ 1&0&0&1\\ &&0&-1 \\&&1&0\end{pmatrix}$ over $\mathbb R$ with characteristic polynomial $(T^2+1)^2$ which becomes $\begin{pmatrix} \iota &1\\ &\iota \end{pmatrix}$ over $\mathbb C$. We can write this as a sum of squares over $\mathbb C$ as follows: 
$$\begin{pmatrix} \iota &1\\ 0&\iota \end{pmatrix} = \begin{pmatrix} \zeta_8 & \zeta_8^{-1}\\ 0&0 \end{pmatrix}^2 + \begin{pmatrix} 0 & 0\\ 0&\zeta_8 \end{pmatrix}^2$$ 
where $\zeta_8 = e^{\frac{2\pi}{8}\iota } = \cos{\frac{2\pi}{8}}+\iota \sin{\frac{2\pi}{8}} = c + \iota s$ which gives us
$$\begin{pmatrix}0&-1&1&0\\ 1&0&0&1\\ &&0&-1 \\&&1&0\end{pmatrix} = \begin{pmatrix}c&-s&c&s\\ s&c&-s&c\\ &&0&0 \\&&0&0\end{pmatrix}^2 +\begin{pmatrix}0&0&0&0\\ 0&0&0&0\\ &&c&-s \\&&s&c\end{pmatrix}^2.$$
\end{example}

Our goal is to show Theorem~\ref{theoremB} which solves Equation~\ref{multi-form}. This requires to understand if the equation $\delta X^{k_1} + \beta Y^{k_2} = A$ has a solution in $M_n(\mathbb K)$ for a given $A\in M_n(\mathbb K)$. We note that without loss of generality, we may assume $\delta =1$. Thus, because of Proposition~\ref{key-prop} our problem gets reduced to the following: 
\begin{proposition}
The equation $X^{k_1} + \beta Y^{k_2} =A$ has a solution in $M_n(\mathbb K)$ if the equation $X^{k_1} + \beta Y^{k_2} =J_{\alpha, l}$ has a solution in $M_l(\mathbb K(\alpha))$ for all $\alpha$ which are eigenvalues of $A$ over $\bar{\mathbb K}$ and $l\geq 1$.
\end{proposition}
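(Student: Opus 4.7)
The plan is to observe that this proposition is a direct application of Proposition~\ref{key-prop} to the specific polynomial $f(X_1, X_2) = X_1^{k_1} + \beta X_2^{k_2} \in \mathbb{K}\langle X_1, X_2\rangle$, so the proof reduces to verifying that the three-step reduction described in Section~\ref{reduction-jordan} goes through for this $f$.

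First I would argue that the equation is invariant under simultaneous conjugation: if $P \in GL_n(\mathbb{K})$, then $(PXP^{-1})^{k_1} + \beta(PYP^{-1})^{k_2} = P(X^{k_1} + \beta Y^{k_2})P^{-1}$, so solvability depends only on the similarity class of $A$. Next, if the canonical form of $A$ is block diagonal, say $\diag(A_1, \ldots, A_r)$, and each $A_i = X_i^{k_1} + \beta Y_i^{k_2}$ in the appropriate matrix algebra, then taking block diagonal sums $X = \diag(X_1, \ldots, X_r)$ and $Y = \diag(Y_1, \ldots, Y_r)$ yields a solution for $A$ since powers and $\mathbb{K}$-scalar combinations respect the block structure. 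Assuming the characteristic polynomial of $A$ is separable (the hypothesis inherited from Proposition~\ref{key-prop}), the rational canonical form is $\bigoplus_i J_{p_i, n_i}$ with each $p_i$ irreducible, so the problem reduces to solving $X^{k_1} + \beta Y^{k_2} = J_{p,l}$ for each such block.

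The final step is to use the $\mathbb{K}$-algebra homomorphism $\mathcal{R}\colon M_l(L) \to M_{ld}(\mathbb{K})$, where $L = \mathbb{K}(\alpha)$ and $\alpha$ is a root of $p(x)$, induced by the left-multiplication representation of $L$ on itself over $\mathbb{K}$. By construction, $\mathcal{R}(J_{\alpha, l}) = J_{p, l}$. Given a solution $(X_0, Y_0) \in M_l(L)^2$ of $X^{k_1} + \beta Y^{k_2} = J_{\alpha, l}$, applying $\mathcal{R}$ preserves products (hence $k$-th powers) and $\mathbb{K}$-scalar multiplication (hence the coefficient $\beta \in \mathbb{K}$), so $(\mathcal{R}(X_0), \mathcal{R}(Y_0))$ solves the equation $X^{k_1} + \beta Y^{k_2} = J_{p, l}$ in $M_{ld}(\mathbb{K})$.

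There is no genuine obstacle here: the argument is essentially the specialization of the general method in Section~\ref{reduction-jordan}. The only subtlety worth flagging is that one uses $\beta \in \mathbb{K}$ crucially when transporting the equation across $\mathcal{R}$, since a $\mathbb{K}$-algebra homomorphism need not commute with scalars from the larger field $L$; here this causes no trouble because the coefficient $\beta$ lies in the base field $\mathbb{K}$.
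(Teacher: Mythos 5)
Your proposal is correct and takes essentially the same approach as the paper: the paper gives no separate proof, stating this proposition as an immediate application of Proposition~\ref{key-prop} to the polynomial $f(X_1,X_2)=X_1^{k_1}+\beta X_2^{k_2}$. Your verification of the three reduction steps (conjugation invariance, block-diagonal splitting, and transport along the $\mathbb K$-algebra homomorphism $\mathcal R$) simply spells out what the paper leaves implicit, and your remark that $\beta\in\mathbb K$ is needed for $\mathcal R$ to respect the equation is a correct and worthwhile observation.
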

\noindent In view of this, we will explicitly deal with the equation
\begin{equation}\label{mainequation} 
X^{k_1} + \beta Y^{k_2} = A \end{equation}
with $\beta \neq 0$ in the following two cases for $A$ and $n \geq 1$:
\begin{itemize}
\item the invertible case when $A=J_{\alpha, n}$ where $\alpha\neq 0$, and 
\item the nilpotent case when $A= J_{0,n}$
\end{itemize}
in the Section~\ref{diagonal-word}. We further note that $A$ is invertible if and only if none of the $p_i(x)$ (appearing in the factorization of the minimal polynomial) are the polynomial $x$, i.e., in the extension fields we need to deal with $J_{\alpha, n}$ with $\alpha\neq 0$. 


\section{Diagonal word}\label{diagonal-word}
In this section, we look at Equation~\ref{mainequation}. Without loss of generality, we may assume $k_1\geq k_2 \geq 2$. Note that if one of the $k_i=1$ then the equation always has a solution.

\subsection{Invertible Elements in the image}
We begin with considering $A=J_{\alpha, n}$, i.e, if $X^{k_1} + \beta Y^{k_2} = J_{\alpha,n}$ with $\alpha\neq 0$ has a solution in $M_n(\mathbb K)$. We have the following:

\begin{lemma}\label{lemma-nonzero-alpha}
Let $k_1, k_2$ be integers and $\alpha\in \mathbb K^*$. Suppose the equation $X^{k_1} + \beta Y^{k_2} = \alpha$ has two solutions $(a,b)$ and $(c,d)$ satisfying $a^{k_1}\neq c^{k_1}$ and  $b^{k_2}\neq d^{k_2}$ over $\mathbb K$. Then, the equation $X^{k_1} + \beta Y^{k_2} = J_{\alpha,n}$ has a solution in $M_n(\mathbb K)_s$ where $M_n(\mathbb K)_s$ denotes the set of diagonalisable matrices.
\end{lemma}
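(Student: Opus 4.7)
The plan is to exhibit an explicit block-diagonal pair $(X,Y)$ whose blocks are $1\times 1$ or $2\times 2$ upper-triangular matrices with distinct diagonal entries---two properties that together ensure $X,Y\in M_n(\mathbb K)_s$. The engine of the construction is the elementary identity
\[
\begin{pmatrix}p & q\\ 0 & r\end{pmatrix}^{\!k} = \begin{pmatrix}p^k & q\cdot\frac{p^k-r^k}{p-r}\\ 0 & r^k\end{pmatrix}\qquad(p\neq r),
\]
which lets me prescribe the superdiagonal entry after taking $k$-th powers. The hypotheses $a^{k_1}\neq c^{k_1}$ and $b^{k_2}\neq d^{k_2}$ will enter twice: they force $a\neq c$ and $b\neq d$, guaranteeing diagonalisability of each $2\times 2$ block, and they keep the denominators in the formulas below nonzero.

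Concretely, I would set
\[
x:=\frac{a-c}{a^{k_1}-c^{k_1}},\qquad y:=\frac{d-b}{\beta(d^{k_2}-b^{k_2})},
\]
and let $X$ be the matrix with $\diag(X)=(a,c,a,c,\ldots)$ and entry $x$ at every superdiagonal position $(2j-1,2j)$ (zero elsewhere off the diagonal), and $Y$ be the matrix with $\diag(Y)=(b,d,b,d,\ldots)$ and entry $y$ at every superdiagonal position $(2j,2j+1)$. The nonzero entries of $X$ are then confined to the disjoint index sets $\{1,2\},\{3,4\},\ldots$ (with a trailing singleton $\{n\}$ if $n$ is odd), so $X$ decomposes as a direct sum of $2\times 2$ blocks $\begin{psmallmatrix}a & x\\ 0 & c\end{psmallmatrix}$ plus possibly a $1\times 1$ block $(a)$. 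Similarly $Y$ decomposes on $\{1\},\{2,3\},\{4,5\},\ldots$ (with a trailing $\{n\}$ if $n$ is even) as $(b)$ followed by $2\times 2$ blocks $\begin{psmallmatrix}d & y\\ 0 & b\end{psmallmatrix}$. All $2\times 2$ blocks have distinct diagonal entries, so both $X$ and $Y$ are diagonalisable.

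For the verification I would apply the displayed identity block by block: the choice of $x$ makes the superdiagonal entry of every $2\times 2$ block of $X^{k_1}$ equal to $1$, and the choice of $y$ makes the superdiagonal entry of every $2\times 2$ block of $\beta Y^{k_2}$ equal to $1$ as well. These two patterns of $1$'s sit at the complementary positions $(2j-1,2j)$ and $(2j,2j+1)$, covering every superdiagonal position of $J_{\alpha,n}$ exactly once. The diagonal of $X^{k_1}+\beta Y^{k_2}$ is constantly $\alpha$ using $a^{k_1}+\beta b^{k_2}=c^{k_1}+\beta d^{k_2}=\alpha$, and since both $X^{k_1}$ and $Y^{k_2}$ are direct sums of blocks of size at most $2$, no entries strictly above the superdiagonal appear. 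Hence $X^{k_1}+\beta Y^{k_2}=J_{\alpha,n}$.

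I do not expect a genuine obstacle; the only point requiring attention is that the $2\times 2$ blocks of $X$ sit at odd starting indices while those of $Y$ sit at even starting indices, so that their superdiagonal contributions are complementary rather than overlapping. This is transparent from the construction and works uniformly in the parity of $n$.
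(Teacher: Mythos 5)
Your proof is correct and follows essentially the same route as the paper's: both exploit the idea of writing $J_{\alpha,n}$ as a sum of two block-diagonal matrices whose $2\times2$ blocks sit at complementary (odd-starting vs.\ even-starting) index ranges, so that the superdiagonal contributions interleave while the diagonals add to $\alpha$ using $a^{k_1}+\beta b^{k_2}=c^{k_1}+\beta d^{k_2}=\alpha$. The only cosmetic difference is that the paper first constructs the post-power summands $\mathcal G_n,\mathcal H_n$ with $1$'s on the superdiagonal and then argues abstractly (via similarity to $\diag(a^{k_1},c^{k_1},\ldots)$, etc.) that each is a $k$-th power of a diagonalisable matrix, whereas you build the pre-power matrices $X,Y$ directly and verify the superdiagonal via the explicit $2\times2$ upper-triangular power formula; both routes are valid and of comparable length.
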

\begin{proof} For $n=1$ we are already given the required solution. So, we may assume $n\geq 2$. 
We have solutions $(a,b), (c,d) \in \mathbb K \times \mathbb K$ such that $a^{k_1} \neq c^{k_1}$ and $b^{k_2}\neq d^{k_2}$, $\alpha=a^{k_1} +\beta b^{k_2}$ and $\alpha=c^{k_1}+\beta d^{k_2}$. With this in mind, we consider the following block diagonal matrices,
\begin{enumerate}
\item when $n$ is even $\mathcal G_n = \underset{\frac{n}{2}}\bigoplus\begin{pmatrix}   a^{k_1} & 1\\    0 & c^{k_1} \end{pmatrix}$ and $\mathcal H_n = {(\beta b^{k_2})} \underset{{\frac{n-2}{2}}}\bigoplus\begin{pmatrix}   \beta d^{k_2} & 1\\
      0 & \beta b^{k_2} \end{pmatrix} \bigoplus {(\beta d^{k_2})}$,
\item when $n$ is odd $\mathcal G_n = \underset{\frac{n-1}{2}}\bigoplus\begin{pmatrix}  a^{k_1} & 1\\   0 & c^{k_1} \end{pmatrix}\bigoplus (a^{k_1})$ and $\mathcal H_n = {(\beta b^{k_2})} \underset{{\frac{(n-1)}{2}}}\bigoplus\begin{pmatrix}  \beta d^{k_2} & 1\\    0 & \beta b^{k_2} \end{pmatrix}$.
\end{enumerate}
Thus, we get $J_{\alpha,n}= \mathcal G_n + \mathcal H_n$. Since $a^{k_1} \neq c^{k_1}$, and $b^{k_2} \neq d^{k_2}$, we get that $\mathcal G_n$ and $\mathcal H_n$ both are diagonalisable matrices, in fact, similar to $\diag\{a^{k_1}, c^{k_1}, a^{k_1}, c^{k_1}, \ldots \}$ and $\beta \diag \{b^{k_2}, d^{k_2},b^{k_2}, d^{k_2}\ldots \}$ respectively. 
Clearly, $\mathcal G_n$ is similar to a matrix which is $k_1$-power of a diagonal matrix, and $\mathcal H_n$ is similar to $\beta$ times $k_2$-power of a diagonal matrix. Hence $J_{\alpha,n}= B^{k_1}+\beta C^{k_2}$ where $B, C \in M_n(\mathbb K)_s$.
\end{proof} 
\noindent We can use Proposition~\ref{key-prop} with this Lemma to show when invertible elements are in the image of the diagonal word map. Suppose $A\in M_n(\mathbb K)$ with a separable characteristic polynomial and each eigenvalue of $A$ over $\bar{\mathbb K}$ satisfies the properties of the Lemma above then $A$ is in the image. 

\begin{remark}\label{remark-nilpotent}
We note that the above proof works for $\alpha =0$ too as long as we have required solutions over $\mathbb K$.  
\end{remark} 
\noindent Here is an example that the image of $\omega$ could have nilpotent elements.
\begin{example}
Consider $\omega \colon M_2(\mathbb K)_s \times M_2(\mathbb K)_s \rightarrow M_2(\mathbb K)$ given 
by $\omega(x_1,x_2) = x_1^2+x_2^2$. Suppose $char(\mathbb K)\neq 2$, and $X^2+1=0$ has a 
solution in $\mathbb K$, say $\iota$, then we can write $\begin{pmatrix} 0& 1\\0&0 \end{pmatrix} = 
\begin{pmatrix} 1 & \frac{1}{2}\\0& 1\end{pmatrix}^2 + \begin{pmatrix} \iota &0\\0&\iota 
\end{pmatrix}^2$. In the case $char(\mathbb K)=2$ we can write $\begin{pmatrix} 0& 1\\0&0 
\end{pmatrix} = \begin{pmatrix} 1 & 0\\0& 0\end{pmatrix}^2 + \begin{pmatrix} 1 
&1\\0& 0 \end{pmatrix}^2$.
\end{example}
\subsection{Jordan nilpotent elements in the image when $n > 2k_1$}

Now we are interested in getting nilpotent elements in the image of $X^{k_1} + \beta Y^{k_2}$. We assume $k_1\geq k_2$. We show that the large-size nilpotents are always in the image. We recall the notion of the Junction matrix from Section 5 of~\cite{KishoreSingh2022}.
\begin{definition}
Let $n\geq 1$ be a positive integer. Let $(n_1, n_2,\ldots , n_k)$ be partition of $n$ with $1 \leq n_1 \leq n_2 \leq \ldots \leq n_k$. The Junction matrix associated with the given partition of n is 
$$\mathfrak{J}_{(n_1,n_2,\ldots ,n_k)}:= e_{n_1,n_1+1}+e_{(n_1+n_2),(n_1+n_2+1)}+ \cdots +e_{(n_1+n_2+\ldots +n_{k-1}),(n_1+n_2+\ldots +n_{k-1}+1)}$$
where $e_{i,j}$ is the matrix with $1$ at $ij^{th}$ place and $0$ elsewhere. 
\end{definition}
\noindent We begin with the following:
\begin{lemma}\label{lemma-junction}
Suppose $n\geq 2k$, and $(n_1, n_2,\ldots , n_k)$ be partition of $n$ with all $n_i\geq 2$. Then, the junction matrix $\mathfrak{J}_{(n_1,n_2,\ldots ,n_k)} = \beta.B^k$ for some $B\in M_n(\mathbb K)$.
\end{lemma}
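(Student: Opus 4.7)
My plan is to bypass any explicit combinatorial construction of $B$ and instead match Jordan canonical forms. I would start by computing the Jordan type of $\mathfrak{J}:=\mathfrak{J}_{(n_1,\ldots,n_k)}$. Because each $n_i \geq 2$, any two distinct junctions $j_l<j_m$ satisfy $j_m \geq j_l + 2$, so $j_l + 1 \neq j_m$; consequently the non-zero entries of $\mathfrak{J}$ do not chain together under matrix multiplication, giving $\mathfrak{J}^2 = 0$. Since $\mathfrak{J}$ has exactly $k - 1$ non-zero entries sitting in distinct rows and distinct columns, its rank is $k - 1$. For a nilpotent matrix of index at most $2$ and rank $r$, the Jordan canonical form consists of $r$ blocks of size $2$ and $n-2r$ blocks of size $1$. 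Hence $\mathfrak{J}$ has Jordan type $(k-1)\,J_{0,2} \oplus (n-2k+2)\,J_{0,1}$.

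Next I would take the specific nilpotent matrix
$$B_0 \;=\; J_{0,\,2k-1} \,\oplus\, 0_{\,n-2k+1} \;\in\; M_n(\mathbb K),$$
which sits inside $M_n(\mathbb K)$ since the partition already forces $n = \sum n_i \geq 2k$ (the hypothesis $n \geq 2k_1$ is consistent with this in the intended application, where $k_1 \geq k$). Using the standard Jordan decomposition of $J_{0,m}^k$ — namely, a single Jordan nilpotent block of size $m = qk + s$ with $0 \leq s < k$ splits into $s$ blocks of size $q+1$ and $k-s$ blocks of size $q$ — applied with $m = 2k-1$, $q = 1$, $s = k-1$, I get $J_{0,2k-1}^k \sim (k-1)\,J_{0,2} \oplus J_{0,1}$. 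Taking the zero summand into account, $B_0^k$ has Jordan type $(k-1)\,J_{0,2} \oplus (n-2k+2)\,J_{0,1}$, which agrees exactly with that of $\mathfrak{J}$.

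Finally, $\beta^{-1}\mathfrak{J}$ is a non-zero scalar multiple of a nilpotent matrix and so has the same Jordan type as $\mathfrak{J}$ (a diagonal conjugation rescales the superdiagonal while preserving block sizes). Hence $B_0^k$ and $\beta^{-1}\mathfrak{J}$ are nilpotent matrices over $\mathbb K$ with identical Jordan type, which forces them to be similar over $\mathbb K$ itself, as similarity of nilpotent matrices over any field is governed purely by Jordan type. Picking $P \in \GL_n(\mathbb K)$ with $P B_0^k P^{-1} = \beta^{-1}\mathfrak{J}$ and setting $B := P B_0 P^{-1}$ then yields $\beta B^k = \mathfrak{J}$, as required. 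The main obstacle in this plan is the first step — correctly extracting the Jordan type of $\mathfrak{J}$ from the partition data (where the $n_i \geq 2$ hypothesis is essential to kill $\mathfrak{J}^2$); once that is in hand, the matching with $B_0^k$ and the passage through similarity are routine.
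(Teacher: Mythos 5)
Your proposal is correct and follows essentially the same route as the paper: both identify the Jordan type of $\mathfrak{J}_{(n_1,\ldots,n_k)}$ as $(k-1)$ blocks $J_{0,2}$ plus $n-2(k-1)$ blocks $J_{0,1}$, take $B$ conjugate to $J_{0,2k-1}\oplus 0_{n-2k+1}$, match $B^k$ against $\beta^{-1}\mathfrak{J}$ via Miller's decomposition of powers of a single nilpotent Jordan block, and absorb the scalar $\beta$ using the fact that a nonzero scalar multiple of a nilpotent matrix is similar to it. The only cosmetic difference is in the first step: the paper determines the Jordan type of $\mathfrak{J}$ by explicitly reordering the standard basis, whereas you read it off from the two invariants $\mathfrak{J}^2=0$ (using $n_i\geq 2$) and $\operatorname{rank}\mathfrak{J}=k-1$, which is a slightly cleaner way to reach the same conclusion.
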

\begin{proof}
Let $\{e_1, e_2, \ldots, e_n\}$ be the standard basis of $\mathbb K^n$ and the matrix $\mathfrak{J}_{(n_1,n_2,...,n_k)}$ corresponds to a linear transformation given by mapping $e_{(n_1+n_2+\cdots +n_{i+1})}$ to $e_{(n_1+n_2+\cdots + n_i)}$ and others to $0$. Reordering the basis elements to 
\begin{eqnarray*}
&\{e_1,e_2,\ldots ,e_{n_{i-1}}, e_{n_{i+2}}, \ldots, e_n, e_{n_1}, e_{(n_1+1)}, e_{(n_1+n_2)}, e_{(n_1+n_2+1)}, \ldots, \\ & e_{(n_1+n_2+\cdots +n_{k-1})},  e_{(n_1+n_2+\cdots + n_{k-1}+1)}\}
\end{eqnarray*}
gives a conjugate of the junction matrix $\mathfrak{J}_{(n_1,n_2,...,n_k)}$, say $C$. The matrix 
$$C = \left( \underset{n-2(k-1)}\bigoplus J_{0,1}\right) \bigoplus \left(\underset{k-1} \bigoplus J_{0,2}\right).$$
We can also see that $C$ is conjugate to $\beta C$ as $C$ is a nilpotent matrix. Hence, $\mathfrak{J}_{(n_1, n_2, \ldots, n_k)}$ is conjugate to $\beta C$. Now consider $B= \left( \underset{n-(2k-1)}\bigoplus J_{0,1}\right) \bigoplus J_{0,2k-1}$ and by Lemma 6.1 of \cite{KishoreSingh2022}, $B^k$ is conjugate to $C$. Therefore, $\mathfrak{J}_{(n_1,n_2,\ldots, n_k)}$ is conjugate to $\beta B^k$.
\end{proof}

\begin{theorem}\label{theorem-large-nilpotent}
Let $k_1\geq k_2\geq 2$ be positive integers. For $n\geq 2k_1$ the Jordan nilpotent matrix $J_{0,n}$ is in the image of $f(X,Y)= X^{k_1} + \beta Y^{k_2}$. 
\end{theorem}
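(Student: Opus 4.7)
The plan is to write $J_{0,n}$ in the form $X^{k_1}+\beta Y^{k_2}$ using the partition/junction-matrix decomposition underlying Lemma~\ref{lemma-junction}. For any partition $(n_1,\ldots,n_k)$ of $n$ with all $n_i\geq 2$, one has the elementary identity
\[
J_{0,n} \;=\; \bigl(J_{0,n_1}\oplus J_{0,n_2}\oplus\cdots\oplus J_{0,n_k}\bigr) \;+\; \mathfrak{J}_{(n_1,\ldots,n_k)},
\]
and it suffices to realise the block-diagonal summand as $X^{k_1}$ and the junction summand as $\beta Y^{k_2}$ for some nilpotent $X,Y\in M_n(\mathbb K)$; since the two summands are added separately, the conjugations used to bring $X^{k_1}$ and $\beta Y^{k_2}$ into the required standard forms can be chosen independently.

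The natural candidate is $X=J_{0,n}$ together with the partition $\pi$ whose parts are the Jordan block sizes of $J_{0,n}^{k_1}$. Writing $n=qk_1+r$ with $0\leq r<k_1$, the hypothesis $n\geq 2k_1$ forces $q\geq 2$, so $\pi=(q,\ldots,q,q{+}1,\ldots,q{+}1)$ has $k_1$ parts all of size $\geq 2$ and $J_{0,n}^{k_1}$ has Jordan form matching the block diagonal of $\pi$. The associated junction matrix $\mathfrak{J}_\pi$ has Jordan form $(k_1-1)\,J_{0,2}\oplus(n-2(k_1-1))\,J_{0,1}$; to realise this as $\beta Y^{k_2}$, I would build $Y$ as a direct sum of Jordan cells of sizes in $\{k_2{+}1,\ldots,2k_2\}$ (a cell $J_{0,k_2+r'}$ contributes $r'$ copies of $J_{0,2}$ and $k_2-r'$ copies of $J_{0,1}$ to $Y^{k_2}$) together with $J_{0,1}$ padding, exploiting coprimality of $k_2$ and $k_2-1$ to assemble exactly $k_1-1$ copies of $J_{0,2}$ in $Y^{k_2}$.

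The main obstacle is the dimension constraint on $Y$: the direct construction above becomes tight when the cells needed to hit the count $k_1-1$ use more dimension than $n$ permits, which can happen when $k_1$ substantially exceeds $k_2$ and $n$ is close to $2k_1$. In such corner cases one abandons the canonical partition $\pi$ and switches to an alternative partition whose junction matrix has a Jordan form more easily realisable as a $k_2$-th power, together with a correspondingly modified choice of $X$ (for example, a direct sum of several Jordan blocks whose $k_1$-th powers combine to yield the revised block-diagonal Jordan form). Organising these sub-cases, indexed by the residue of $k_1-1$ modulo $k_2$ and by the excess $n-2k_1$, and verifying feasibility in each using Lemma~6.1 of \cite{KishoreSingh2022} for the Jordan-block arithmetic under powers, constitutes the bulk of the remaining work.
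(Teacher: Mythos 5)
Your strategy mirrors the paper's: peel off the Jordan form of $J_{0,n}^{k_1}$ as the $X^{k_1}$-part and try to realise the leftover junction matrix as $\beta Y^{k_2}$. You correctly sense that this can fail, and the failure is genuine, not just a dimension-counting nuisance to be organised away. With the partition having $k_1$ parts, the junction matrix has Jordan type $(k_1-1)\cdot J_{0,2}\oplus\big(n-2(k_1-1)\big)\cdot J_{0,1}$; for this to be $Y^{k_2}$ with $Y$ nilpotent, every Jordan cell of $Y$ must have size at most $2k_2$ (otherwise a $J_{0,3}$ appears in the power), and if $a$ is the number of cells of size $>k_2$ one needs both $ak_2\geq k_1-1$ and $ak_2+(k_1-1)\leq n$. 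For $n=2k_1$ this forces $k_2-\big((k_1-1)\bmod k_2\big)\leq 2$, which already fails for $k_1=10$, $k_2=7$, $n=20$: nine copies of $J_{0,2}$ are required, a single cell of $Y$ would need size $16>2k_2$, and two cells of size $>7$ cost at least $23>20$. Worse, your fallback of ``switching to an alternative partition'' is unavailable exactly where it is needed: when $n=2k_1$, the only nilpotent Jordan type with all blocks of size $\geq 2$ that arises as a $k_1$-th power is $k_1\cdot J_{0,2}$ (any $X$ with $X^{k_1}$ free of $J_{0,1}$'s must have every Jordan cell of size $\geq 2k_1$), so the junction part always has exactly $k_1-1$ junctions and the impossibility persists. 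The case analysis you defer cannot be made to close.

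The fix is to interchange the roles of the two exponents. Peel off instead the Jordan form $D$ of $J_{0,n}^{k_2}$: since $n\geq 2k_1\geq 2k_2$, $D$ is a direct sum of $k_2$ Jordan blocks each of size $\geq 2$, and because $\beta^{-1}D$ is conjugate to $J_{0,n}^{k_2}$ (rescaling a nilpotent preserves its Jordan type) one has $D=\beta Y^{k_2}$ with $Y$ conjugate to $J_{0,n}$. The complementary junction matrix now has only $k_2-1$ junctions, with Jordan type $(k_2-1)\cdot J_{0,2}\oplus\big(n-2(k_2-1)\big)\cdot J_{0,1}$, and since $k_2-1<k_1$ this is realised as $X^{k_1}$ by the \emph{single} cell $X=J_{0,\,k_1+k_2-1}\oplus J_{0,1}\oplus\cdots\oplus J_{0,1}$: indeed $J_{0,\,k_1+k_2-1}^{\,k_1}$ has Jordan type $(k_2-1)\cdot J_{0,2}\oplus(k_1-k_2+1)\cdot J_{0,1}$, and $k_1+k_2-1\leq 2k_1-1<n$ so it fits. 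No case analysis is needed. (I note that the published proof of Theorem~\ref{theorem-large-nilpotent} reads with the same slip: Lemma~\ref{lemma-junction} produces a $k$-th power out of a junction matrix with $k$ parts, yet the theorem applies it to a partition with $k_1$ parts while asserting a $k_2$-th power; the intended argument must take the block decomposition from $J_{0,n}^{k_2}$ and realise the junction as the $k_1$-th power, exactly as above.)
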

\begin{proof}
We begin with considering $J_{0,n}^{k_1}$. Let us denote $n'= \lfloor\frac{n}{k_1}\rfloor$ and $n''= \lceil\frac{n}{k_1}\rceil$. 
Since $n\geq 2k_1$, we have $n'' \geq n' \geq 2$. We find $m$ such that $m\equiv n \imod {k_1}$ and $0\leq m\leq k_1$. Then, from Miller's Lemma (Lemma 2~\cite{Miller2016}) we get that $J_{0,n}^{k_1}$ is conjugate to 
$$JF(J_{0,n}^{k_1})=\left(\bigoplus_{k_1-m}J_{0, n'}\right)\bigoplus \left(\bigoplus_{m}J_{0,n''}\right).$$
Now, we consider $\mathfrak J = J_{0,n} - JF(J_{0,n}^{k_1})$. The matrix $\mathfrak J$ is a junction matrix associated to the following partition of $n$: $\left(\underbrace{n', \ldots, n'}_{k_1-m}, \underbrace{n'', \ldots, n''}_{m} \right)$. From the Lemma~\ref{lemma-junction} it follows that $\mathfrak J$ is conjugate to $\beta B^{k_2}$. This completes the proof.
\end{proof}
\noindent Thus, all nilpotent matrices of index $\geq 2k_1$ are in the image.

\subsection{Nilpotent elements in the image}

Now we develop another method to get nilpotent matrices in the image depending on the existence of solutions of certain equations over the base field. 
Let us begin with the following $n$-by-$n$ matrix for $n\geq 3$ and $\epsilon\neq 0$ in $\mathbb K$, 
$$M=M(\epsilon, {\bf x}, {\bf y}, z) = \begin{pmatrix} 
\epsilon J_{0, (n-1)} & ~^t{\bf x} \\ {\bf y} &z \end{pmatrix}$$
over the field $\mathbb K$ where ${\bf x}=(x_1, x_2, \ldots, x_{n-1})$ and ${\bf y}=(y_1, y_2, \ldots, y_{n-1})$ are elements of $\mathbb{K}^{n-1}$ and $~^t{\bf x}$ denotes the transpose of the vector $\bf{x}$. Note that $\epsilon J_{0,(n-1)}$ is conjugate to $J_{0,(n-1)}$. 
The characteristic polynomial of $M$ is 
\begin{eqnarray}\label{charpolyM}
\chi_M(T) &=& T^n-zT^{n-1} - \left(\sum_{i=1}^{n-1} x_iy_i\right)T^{n-2} - \epsilon\left(\sum_{i=2}^{n-1} x_iy_{i-1}\right)T^{n-3} - \cdots \\ 
&-&  \epsilon^{j-2}\left(\sum_{i=j-1}^{n-1} x_iy_{i-j+2}\right)T^{n-j} - \cdots - \epsilon^{n-3}(x_{n-2}y_1 + x_{n-1}y_2)T  - \epsilon^{n-2}x_{n-1}y_1. \nonumber
\end{eqnarray}
We wish to understand when $M$ is a $k$ power regular semisimple element (i.e., with distinct diagonal entries). We recall that the elementary symmetric polynomials are 
$$\mathcal E_i(X_1,\ldots, X_n)=\sum_{1\leq j_1 < j_2 < \cdots < j_i \leq n} X_{j_1} X_{j_2}\cdots X_{j_i}$$
and $\prod_{r=1}^n(T-x_r)=T^n-\mathcal E_1(x_1,\dots,x_n)T^{n-1}+\cdots +(-1)^n \mathcal E_n(x_1,\dots,x_n)$. 
\begin{definition}
Let $\mathbb K$ be a field and $\lambda_1, \ldots, \lambda_{n}$ be a solution of $X_1^k+X_2^k+\cdots + X_{n}^k = \alpha$ over $\mathbb K$. We say the solution is {\bf regular} if  $\lambda_i^k\neq \lambda_j^k$ for $i \neq j$. Further, if none of the $\lambda_i$ are $0$ we say the solution is non-zero regular. 
\end{definition}
\noindent Note that if $0$ appears in a regular solution then it can appear at most once. We have the following,
\begin{lemma}\label{lemma-nilpotent}
Let $n\geq 3$, and $\mu_1, \ldots, \mu_{n}$ be a regular solution of $X_1^k + X_2^k + \cdots + X_{n}^k = z$ over $\mathbb K$. Then, for a given ${\bf y}$ with $y_1 \neq 0$ (similarly for a given ${\bf x}$ with $x_{n-1} \neq 0$) there exists ${\bf x}$ (respectively ${\bf y}$) such that the matrix $M(\epsilon, {\bf x}, {\bf y}, z)$ is conjugate to the $k$ power regular semisimple element $\diag(\mu_1^k, \ldots, \mu_n^k)$.
\end{lemma}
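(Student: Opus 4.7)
\medskip

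\noindent\textbf{Proof proposal.} The plan is to reduce the statement to matching characteristic polynomials and then solving a triangular linear system. Two $n \times n$ matrices with the same characteristic polynomial that has $n$ distinct roots in $\mathbb K$ are both diagonalisable with the same spectrum, hence conjugate. Therefore it suffices to produce (for the first case) an ${\bf x}$ so that the characteristic polynomial of $M(\epsilon, {\bf x}, {\bf y}, z)$ equals $\prod_{r=1}^n (T-\mu_r^k)$; the regularity hypothesis $\mu_i^k \neq \mu_j^k$ for $i \neq j$ then forces $M$ to be conjugate to $\diag(\mu_1^k, \ldots, \mu_n^k)$.

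Expand $\prod_{r=1}^n (T - \mu_r^k) = T^n - e_1 T^{n-1} + e_2 T^{n-2} - \cdots + (-1)^n e_n$ with $e_j = \mathcal E_j(\mu_1^k,\ldots,\mu_n^k)$. Comparing with the explicit formula~\eqref{charpolyM}, the coefficient of $T^{n-1}$ automatically matches, because $e_1 = \sum_r \mu_r^k = z$ by hypothesis. The remaining $n-1$ coefficients give, for each $j = 2,\ldots, n$, a single equation of the form
\[
\epsilon^{j-2}\sum_{i=j-1}^{n-1} x_i\, y_{i-j+2} \;=\; (-1)^{j}\, e_j,
\]
which is linear in the unknowns $x_1,\ldots,x_{n-1}$.

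The crucial observation is that this linear system is \emph{triangular} once the equations are ordered from $j=n$ down to $j=2$: equation $j$ only involves $x_{j-1}, x_j, \ldots, x_{n-1}$, and the coefficient of the new variable $x_{j-1}$ is $\epsilon^{j-2} y_1$. By assumption $\epsilon \neq 0$ and $y_1 \neq 0$, so this coefficient is invertible in $\mathbb K$. Hence back-substitution (starting by solving $\epsilon^{n-2} x_{n-1} y_1 = (-1)^n e_n$ for $x_{n-1}$, then using this to solve for $x_{n-2}$, etc.) determines ${\bf x}$ uniquely. With this ${\bf x}$, the matrix $M(\epsilon, {\bf x}, {\bf y}, z)$ has the prescribed characteristic polynomial with $n$ distinct roots $\mu_1^k, \ldots, \mu_n^k \in \mathbb K$, and is therefore conjugate to $\diag(\mu_1^k,\ldots,\mu_n^k)$ as required.

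The symmetric case, where ${\bf x}$ is given with $x_{n-1} \neq 0$ and ${\bf y}$ is to be found, proceeds identically: one now orders the equations again from $j=n$ down to $j=2$ but reads them as linear in $y_1, y_2, \ldots, y_{n-1}$, with the new variable $y_{n-j+1}$ in equation $j$ appearing with coefficient $\epsilon^{j-2} x_{n-1} \neq 0$, giving a triangular system that is again solved by back-substitution. The only place any genuine hypothesis is used beyond bookkeeping is at the very end, where regularity $\mu_i^k \neq \mu_j^k$ is invoked to promote ``same characteristic polynomial'' to ``conjugate''; the rest is a triangular-solve, so I do not anticipate a serious obstacle.
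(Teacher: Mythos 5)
Your proposal is correct and takes essentially the same approach as the paper: equate the characteristic polynomial of $M$ with $\prod_{r=1}^n(T-\mu_r^k)$ coefficient by coefficient (the $T^{n-1}$ coefficient matching automatically from $z=\sum\mu_i^k$), observe the resulting linear system in the $x_i$ (respectively $y_i$) is triangular with nonzero diagonal entries $\epsilon^{j-2}y_1$ (resp.\ $\epsilon^{j-2}x_{n-1}$), and use regularity to upgrade equality of characteristic polynomials to conjugacy to the diagonal matrix. One small bookkeeping slip worth fixing: since the coefficient of $T^{n-j}$ in Equation~\ref{charpolyM} already carries a leading minus sign, the right-hand side of your equation for each $j$ should be $(-1)^{j+1}e_j$ rather than $(-1)^j e_j$ (so the last one is $(-1)^{n+1}e_n$, as in the paper), but this does not affect the triangular-solvability argument.
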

\begin{proof} Let us write $M=M(\epsilon, {\bf x}, {\bf y}, z)$ and we have expression for $ \chi_M(T)$ in the Equation~\ref{charpolyM}. 
We require $\chi_M(T)=(T-\mu_1^k)(T-\mu_2^k)\cdots (T-\mu_n^k)$ which leads to having a solution to the following system of equations: 
\begin{eqnarray*}
z &=& \mathcal E_{1}(\mu_1^k, \ldots, \mu_n^k)= \sum \mu_i^k\\
x_1y_1+\cdots +x_{n-1}y_{n-1} &=& - \mathcal E_{2}(\mu_1^k, \ldots, \mu_n^k)\\
x_{2}y_1 + x_{3}y_2+\cdots +x_{n-1}y_{n-2} &=& \epsilon^{-1}\mathcal E_{3}(\mu_1^k, \ldots, \mu_n^k)\\
\vdots &=&\vdots \\
x_{n-2}y_1 +x_{n-1}y_2 &=& (-1)^{n} \epsilon^{-(n-3)}\mathcal E_{n-1}(\mu_1^k, \ldots, \mu_n^k)\\
 x_{n-1}y_1 &=& (-1)^{n+1} \epsilon^{-(n-2)} \mathcal E_n(\mu_1^k, \ldots, \mu_n^k)=(-1)^{n+1}\epsilon^{-(n-2)}\prod\mu_i^k.
\end{eqnarray*}
Clearly, the first equation is satisfied. We write the remaining equations in matrix form as follows: 
$$\begin{pmatrix} y_1& y_2& \cdots & y_{n-1} \\ 0& y_1 & \cdots & y_{n-2} \\ \vdots &&\ddots & \vdots\\ 0& 0& \cdots & y_1\end{pmatrix} \begin{pmatrix}x_1\\ x_2\\ \vdots\\ x_{n-1} \end{pmatrix}=\begin{pmatrix} - \mathcal E_{2}(\mu_1^k, \ldots, \mu_n^k)\\ \\ \vdots \\ (-1)^{n+1}\epsilon^{-(n-2)}\mu_1^k\cdots \mu_n^k\end{pmatrix}.$$
Since $y_1\neq 0$ the matrix is invertible and we have a solution. 

Now, we can also write the above equations taking the bottom one first and thinking of $y_i$ as variables as follows:
$$\begin{pmatrix} x_{n-1}& 0& \cdots & 0\\ x_{n-2}& x_{n-1} & \cdots & 0 \\ \vdots &&\ddots & \vdots\\ x_1& x_2& \cdots & x_{n-1}\end{pmatrix} \begin{pmatrix}y_1\\ y_2\\ \vdots\\ y_{n-1} \end{pmatrix}=\begin{pmatrix} (-1)^{n+1}\epsilon^{-(n-2)}\mu_1^k\cdots \mu_n^k \\ \\ \vdots \\ - \mathcal E_{2}(\mu_1^k, \ldots, \mu_n^k)\end{pmatrix}.$$
Since $x_{n-1}\neq 0$ we have a solution to this equation.
\end{proof}

\begin{corollary}\label{cor1-nilpotent}
Let $n\geq 3$, $\lambda_1, \ldots, \lambda_{n}$ be a regular solution of $X_1^k+X_2^k+\cdots + X_{n}^k = 1$ over $\mathbb K$ and $\epsilon\in \mathbb K^*$ with $\epsilon\neq -1$. Then, the matrix $M(\epsilon, (1+\epsilon){\bf e}_{n-1}, {\bf y}, 1)$ is a $k$ power regular semisimple element where $y_i= \frac{(-1)^{n-i}}{(1+\epsilon)}\mathcal E_{n-i+1}(\lambda_1^k, \ldots, \lambda_{n-1}^k)$ and ${\bf e}_{n-1}=(0, \ldots, 0,1)$.
\end{corollary}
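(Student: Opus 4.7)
The strategy is to read the statement as a clean specialization of Lemma~\ref{lemma-nilpotent}. Taking $z=1$ and ${\bf x} = (1+\epsilon){\bf e}_{n-1}$, the hypothesis $x_{n-1} = 1+\epsilon \neq 0$ holds since $\epsilon \neq -1$, so the ``solve for ${\bf y}$'' form of the lemma applies. Because every $x_i$ except $x_{n-1}$ vanishes, the lower-triangular Toeplitz matrix on the left of the system in that lemma collapses to the scalar matrix $(1+\epsilon) I_{n-1}$, and the system decouples into the single-term relations
$$(1+\epsilon)\, y_{n-j+1} = (-1)^{j-1}\epsilon^{-(j-2)}\mathcal{E}_j(\mu_1^k,\ldots,\mu_n^k), \qquad j=2,\ldots,n,$$
which yield each $y_i$ in closed form in one step.

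Comparing this output with the $y_i$ prescribed in the corollary, the two formulae agree exactly when $\mathcal{E}_j(\mu_1^k,\ldots,\mu_n^k) = \epsilon^{j-2}\mathcal{E}_j(\lambda_1^k,\ldots,\lambda_{n-1}^k)$ for $j=2,\ldots,n$, together with the trace condition $\sum_i \mu_i^k = 1$ coming from the $(n,n)$-entry $z=1$. The $j=n$ relation forces $\mu_1^k\cdots\mu_n^k=0$, so I would set $\mu_n = 0$; the remaining $\mu_1^k,\ldots,\mu_{n-1}^k$ are then the roots of the explicit polynomial
$$P(T) = T^{n-1} - T^{n-2} + \sum_{j=2}^{n-1}(-1)^j \epsilon^{j-2}\, \mathcal{E}_j(\lambda_1^k,\ldots,\lambda_{n-1}^k)\, T^{n-1-j} \in \mathbb{K}[T].$$
An equivalent verification, which is how I would actually carry out the check, is to plug ${\bf x},{\bf y},z$ directly into Equation~\ref{charpolyM}, use $\mathcal{E}_n(\lambda_1^k,\ldots,\lambda_{n-1}^k)=0$ to see that $y_1 = 0$, and then read off $\chi_M(T) = T\cdot P(T)$. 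Invoking Lemma~\ref{lemma-nilpotent} with the $\mu_i$ just described then identifies $M(\epsilon,(1+\epsilon){\bf e}_{n-1},{\bf y},1)$ as conjugate to $\diag(\mu_1^k,\ldots,\mu_n^k)$, hence as the $k$-th power of a diagonalisable matrix.

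What remains is to verify that the spectrum $\{0\} \cup \{\mu_1^k,\ldots,\mu_{n-1}^k\}$ is regular. The eigenvalue $0$ is simple precisely when $P(0) \neq 0$, i.e.\ when $\lambda_1^k\cdots\lambda_{n-1}^k \neq 0$, which one arranges by the regularity hypothesis (at most one $\lambda_i$ vanishes, and we place any such $\lambda_i$ in position $n$). The main obstacle will be the separability of $P(T)$: its discriminant is a nontrivial polynomial in $\lambda_1^k,\ldots,\lambda_{n-1}^k$ and $\epsilon$, and arguing that it does not vanish will presumably require either a direct computation or a genericity argument that excludes the bad values of $\epsilon$ from those for which the corollary is applied downstream.
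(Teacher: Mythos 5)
Your first two steps are exactly right: with ${\bf x} = (1+\epsilon){\bf e}_{n-1}$ and $z=1$ the lower-triangular Toeplitz system in Lemma~\ref{lemma-nilpotent} reduces to $(1+\epsilon)I_{n-1}$, and the ${\bf y}$-equations decouple. Where you go astray is in trying to force a match with the printed formula by introducing new $\mu_i$'s as roots of your polynomial $P(T)$. That move abandons the whole point of the lemma: the corollary is supposed to follow by setting $\mu_i = \lambda_i$, so that $\chi_M(T) = \prod_{i=1}^n (T-\lambda_i^k)$ is built in and $M$ is conjugate to $\diag(\lambda_1,\ldots,\lambda_n)^k$, a $k$-th power of a diagonal matrix over $\mathbb K$, regular semisimple precisely because the $\lambda_i$ form a \emph{regular} solution. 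With your $P(T)$ route there is, as you acknowledge, no control over whether $P$ is separable, and worse, even if it were, its roots need not be $k$-th powers of elements of $\mathbb K$, so you would not know that $M$ is a $k$-th power at all. You have not proved the corollary; you have only produced the characteristic polynomial.

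What you should have concluded instead is that the printed formula for $y_i$ contains typos. Feeding ${\bf x}=(1+\epsilon){\bf e}_{n-1}$, $z=1$, $\mu_i=\lambda_i$ into the system from Lemma~\ref{lemma-nilpotent} yields
\[
y_i \;=\; \frac{(-1)^{n-i}}{1+\epsilon}\,\epsilon^{-(n-i-1)}\,\mathcal E_{n-i+1}\bigl(\lambda_1^k,\ldots,\lambda_n^k\bigr),
\]
i.e.\ the corollary's expression is missing the factor $\epsilon^{-(n-i-1)}$ and should run over all $n$ of the $\lambda_j^k$, not just $n-1$ of them. A quick sanity check confirms this: taking the formula literally, $y_1 = \frac{(-1)^{n-1}}{1+\epsilon}\mathcal E_n(\lambda_1^k,\ldots,\lambda_{n-1}^k) = 0$ identically (an $n$-th elementary symmetric polynomial in $n-1$ variables vanishes), which flatly contradicts the downstream use of this corollary in Theorem~\ref{nilpotent-n3}, where $y_1\neq 0$ is asserted to follow from it. Noticing that contradiction would have told you the literal formula cannot be what is meant, and that the clean $\mu_i=\lambda_i$ specialization is the intended (and complete) proof.
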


\begin{corollary}\label{cor2-nilpotent}
Let $n\geq 3$, $\mu_1, \ldots, \mu_{n}$ be a regular solution of $X_1^k+X_2^k+\cdots + X_{n}^k = -\frac{1}{\beta}$ over $\mathbb K$ with $\mu_n=0$. Then, for a given ${\bf y}$ with $y_1\neq 0$ there exists ${\bf x}$ with $x_{n-1}=0$ such that the matrix $M(1, {\bf x}, {\bf y}, -1)$ is conjugate to $\beta$ times a $k$ power regular semisimple element.
\end{corollary}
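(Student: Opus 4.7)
The strategy is to mimic the proof of Lemma~\ref{lemma-nilpotent} with the specializations $\epsilon=1$ and $z=-1$, but now targeting the scaled spectrum $\{\beta\mu_1^k,\ldots,\beta\mu_n^k\}$ and imposing the additional constraint $x_{n-1}=0$. Concretely, I would equate the characteristic polynomial of $M(1,{\bf x},{\bf y},-1)$ from Equation~\ref{charpolyM} with $\prod_{i=1}^n(T-\beta\mu_i^k)$. Matching the coefficient of $T^{n-1}$ gives $z=\beta\sum\mu_i^k$, which is $-1$ by hypothesis, so this constraint is automatic. Matching the coefficient of $T^{n-j}$ for $j=2,\ldots,n$ yields the system
$$\sum_{i=j-1}^{n-1}x_i\, y_{i-j+2}\;=\;(-1)^{j+1}\beta^j\,\mathcal E_j(\mu_1^k,\ldots,\mu_n^k).$$

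The crucial observation is the bottom equation ($j=n$): it reads $x_{n-1}y_1=(-1)^{n+1}\beta^n\mathcal E_n(\mu_1^k,\ldots,\mu_n^k)$, and the right-hand side vanishes because $\mu_n=0$ makes $\mathcal E_n(\mu_1^k,\ldots,\mu_n^k)=\mu_1^k\cdots\mu_n^k=0$. Since $y_1\neq 0$, this \emph{forces} $x_{n-1}=0$, which is precisely the required constraint. Substituting $x_{n-1}=0$ into the remaining $n-2$ equations leaves a linear system for $x_1,\ldots,x_{n-2}$ whose coefficient matrix is the $(n-2)\times(n-2)$ upper-triangular matrix with $y_1$ on the diagonal (exactly the matrix from Lemma~\ref{lemma-nilpotent}, but one size smaller). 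Since $y_1\neq 0$ this matrix is invertible, and the system has a unique solution, producing the required ${\bf x}$.

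Finally, because the solution $\mu_1,\ldots,\mu_n$ is regular and $\beta\neq 0$, the values $\beta\mu_1^k,\ldots,\beta\mu_n^k$ are pairwise distinct. Hence $M(1,{\bf x},{\bf y},-1)$, having this as its characteristic polynomial, is diagonalizable and therefore conjugate to $\beta\cdot\diag(\mu_1^k,\ldots,\mu_n^k)$. I do not expect a serious obstacle here beyond the sign and power-of-$\beta$ bookkeeping; the entire content of the corollary is that the hypothesis $\mu_n=0$ is \emph{exactly} the condition that makes the otherwise obstructing bottom equation compatible with the additional requirement $x_{n-1}=0$.
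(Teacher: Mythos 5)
Your proposal is correct and follows essentially the same route as the paper: equate $\chi_M(T)$ with $\prod_i(T-\beta\mu_i^k)$, observe that the $T^{n-1}$ coefficient matches because $\beta\sum\mu_i^k=-1$, and that the constant-term equation $x_{n-1}y_1=(-1)^{n+1}\beta^n\prod\mu_i^k=0$ forces $x_{n-1}=0$ since $\mu_n=0$ and $y_1\neq 0$, after which the remaining upper-triangular system in $x_1,\ldots,x_{n-2}$ is solvable. The paper's stated proof is just a one-line pointer to Lemma~\ref{lemma-nilpotent} with the same key observation; you have merely spelled out the bookkeeping it elides.
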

\begin{proof} 
The proof follows along the similar lines as for the Lemma~\ref{lemma-nilpotent} by equating $\chi_M(T)$ to $(T-\beta \mu_1^k)\cdots (T-\beta \mu_n^k)$ by noting that the equation $x_{n-1}y_1=0$ will ensure $x_{n-1}=0$.
\end{proof}

\begin{theorem}\label{nilpotent-n3}
Let $n\geq 3$ and $\mathbb K$ be a field with $|\mathbb K|>2$ and suppose 
\begin{enumerate}
\item the equations $X_1^{k_1} +X_2^{k_1}+\cdots + X_{n}^{k_1} = 1$ has a regular solution, and  
\item in addition, for $n\geq 3$, $X_1^{k_2} +X_2^{k_2}+\cdots + X_{n-1}^{k_2} = -\frac{1}{\beta}$ has a non-zero regular solution. 
\end{enumerate}
Then, the nilpotent matrix $J_{0,n}$ is in the image of $f(X,Y)= X^{k_1} + \beta Y^{k_2}$. 
\end{theorem}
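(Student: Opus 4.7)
The plan is to realize $J_{0,n}$, up to similarity, as a sum $M_1+M_2$, with $M_1$ supplied by Corollary~\ref{cor1-nilpotent} (so that $M_1=X^{k_1}$) and $M_2$ supplied by Corollary~\ref{cor2-nilpotent} (so that $M_2=\beta Y^{k_2}$). Conjugating both summands by a single $P\in\GL_n(\mathbb K)$ then places $J_{0,n}$ itself in the image of $f(X,Y)=X^{k_1}+\beta Y^{k_2}$.

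Concretely, choose $\epsilon\in\mathbb K^*$ with $\epsilon\neq -1$, which is available since $|\mathbb K|>2$. Feeding the regular solution of $\sum X_i^{k_1}=1$ from hypothesis~(1) into Corollary~\ref{cor1-nilpotent} produces
\[
M_1=M(\epsilon,(1+\epsilon){\bf e}_{n-1},{\bf y}_1,1)=X^{k_1}
\]
for some $X\in M_n(\mathbb K)$. From the non-zero regular solution $\mu_1,\dots,\mu_{n-1}$ of $\sum X_i^{k_2}=-1/\beta$ supplied by hypothesis~(2), append $\mu_n=0$ to obtain a regular solution in $n$ variables (its $k_2$-powers remain pairwise distinct), set ${\bf y}_2:=-{\bf y}_1$, and feed this into Corollary~\ref{cor2-nilpotent} to obtain
\[
M_2=M(1,{\bf x}_2,{\bf y}_2,-1)=\beta Y^{k_2}
\]
for some $Y\in M_n(\mathbb K)$, where $x_{2,n-1}=0$.

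Now compute $M_1+M_2$ blockwise. Its upper-left $(n-1)\times(n-1)$ block is $(1+\epsilon)J_{0,(n-1)}$; the last row vanishes since ${\bf y}_1+{\bf y}_2=0$; the $(n,n)$-entry is $1+(-1)=0$; the $(n-1,n)$-entry is $(1+\epsilon)\cdot 1+x_{2,n-1}=1+\epsilon\neq 0$; and the remaining entries of the last column above row $n-1$ are the $x_{2,i}$. Hence $M_1+M_2$ is strictly upper triangular with every one of the $n-1$ entries on the superdiagonal equal to the nonzero scalar $1+\epsilon$. A short iteration gives $(M_1+M_2)^{n-1}e_n=(1+\epsilon)^{n-1}e_1\neq 0$ whereas $(M_1+M_2)^n=0$, so $M_1+M_2$ has nilpotency index exactly $n$ and is therefore similar to $J_{0,n}$. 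Choosing $P\in\GL_n(\mathbb K)$ with $P(M_1+M_2)P^{-1}=J_{0,n}$ yields
\[
J_{0,n}=(PXP^{-1})^{k_1}+\beta(PYP^{-1})^{k_2}.
\]

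The delicate step is verifying the input hypothesis $y_1\neq 0$ of Corollary~\ref{cor2-nilpotent} for the vector ${\bf y}_2=-{\bf y}_1$, i.e.\ $y_{1,1}\neq 0$. From the explicit formula for ${\bf y}_1$ coming out of the system in Lemma~\ref{lemma-nilpotent} and used in Corollary~\ref{cor1-nilpotent}, this reduces to the non-vanishing of the top symmetric function $\prod_i\lambda_i^{k_1}$, which forces the regular solution in hypothesis~(1) to be taken with all coordinates nonzero. Once this compatibility is secured, everything else is the block-level similarity check sketched above.
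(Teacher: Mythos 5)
Your proof reproduces the paper's argument: conjugate $J_{0,n}$ to the sum $M(\epsilon,(1+\epsilon)\mathbf e_{n-1},\mathbf y,1)+M(1,\mathbf x,-\mathbf y,-1)$, with the first summand a $k_1$-th power via Corollary~\ref{cor1-nilpotent} and the second $\beta$ times a $k_2$-th power via Corollary~\ref{cor2-nilpotent} (applied with $\mu_n=0$). Your explicit block-level check that the sum is strictly upper triangular with constant superdiagonal $1+\epsilon$, and hence nilpotent of index exactly $n$, usefully fills in a step the paper dismisses as ``clear.''

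The delicacy you flag is a genuine one. The paper asserts that Corollary~\ref{cor1-nilpotent} ``also ensures $y_1\neq 0$,'' but from the triangular system in Lemma~\ref{lemma-nilpotent} with $\mathbf x=(1+\epsilon)\mathbf e_{n-1}$ one finds $y_1$ is a nonzero scalar times $\mathcal E_n(\lambda_1^{k_1},\ldots,\lambda_n^{k_1})=\prod_i\lambda_i^{k_1}$ (the printed formula in Corollary~\ref{cor1-nilpotent} appears to have typos in the index $n-1$ and the missing $\epsilon$-powers, but this is what the derivation gives). So $y_1\neq 0$ holds exactly when the regular solution in hypothesis~(1) has no zero coordinate, which hypothesis~(1) as stated does not guarantee (a regular solution may have one $\lambda_i=0$). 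Strictly speaking, hypothesis~(1) should ask for a \emph{nonzero} regular solution, parallel to hypothesis~(2); this is harmless for the paper's downstream applications over $\mathbb C$, over large $\mathbb F_q$, and over $\mathbb R$ with $k_2$ odd, where nonzero regular solutions are always available, but you are right to single it out, and your proof is complete once that strengthening is made explicit.
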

\begin{proof}
It is enough to show that the Jordan nilpotent matrix $J_{0,n}$ is in the image of $f(X,Y)= X^{k_1} + \beta Y^{k_2}$. Let $\epsilon \in \mathbb K^*$ such that $1+\epsilon \neq 0$.
We write 
$$M(\epsilon, (1+\epsilon){\bf e}_{n-1}, {\bf y}, 1) + M(1, {\bf x}, -{\bf y}, -1) = \begin{pmatrix} (1+\epsilon) J_{0, (n-1)} & (1+\epsilon){~^t \bf e}_{n-1}+{~^t\bf x} \\ 0& 0\end{pmatrix}$$
where ${\bf e}_{n-1}=(0, \ldots, 0,1)$ and ${\bf x}=(x_1, \ldots, x_{n-2}, 0)$. The matrix on the right-hand side is conjugate to $J_{0,n}$. The first matrix on the left side is $k_1$th power of a diagonalisable matrix (follows from Corollary~\ref{cor1-nilpotent}). It also ensures $y_1\neq 0$. The second matrix is $\beta$ times a $k_2$th power of a diagonalisable matrix (follows from Corollary~\ref{cor2-nilpotent}).
\end{proof}

\begin{theorem}\label{nilpotent-n2}
Let $\mathbb K$ be a field with $|\mathbb K|>2$. Suppose the equation $X_1^{k_2} +X_2^{k_2} = -\frac{1}{\beta}$ has a regular solution. Then, the nilpotent matrix $J_{0,2}$ is in the image of $f(X,Y)= X^{k_1} + \beta Y^{k_2}$. 
\end{theorem}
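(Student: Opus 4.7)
The plan is to reduce the existence of a preimage of $J_{0,2}$ to a simple linear algebra problem over $\mathbb K$. Using the regular solutions $(\lambda_1,\lambda_2)$ of $X_1^{k_1}+X_2^{k_1}=1$ and $(\mu_1,\mu_2)$ of $X_1^{k_2}+X_2^{k_2}=-1/\beta$, I will directly construct $A,B \in M_2(\mathbb K)$ with $A+\beta B = J_{0,2}$ such that $A$ has distinct eigenvalues $\lambda_1^{k_1},\lambda_2^{k_1}$ and $B$ has distinct eigenvalues $\mu_1^{k_2},\mu_2^{k_2}$ in $\mathbb K$. These eigenvalues lie in $\mathbb K$, so $A$ and $B$ will be diagonalizable over $\mathbb K$, and by conjugating $\mathrm{diag}(\lambda_1,\lambda_2)$ and $\mathrm{diag}(\mu_1,\mu_2)$ by the same change-of-basis matrices we produce $X,Y \in M_2(\mathbb K)$ with $X^{k_1}=A$ and $Y^{k_2}=B$, finishing the proof.

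Concretely, I would parametrize $A=\begin{pmatrix} a & b \\ c & 1-a \end{pmatrix}$ so that $\mathrm{tr}(A)=1$ automatically, and set $B=\tfrac{1}{\beta}(J_{0,2}-A)$; then $\mathrm{tr}(B)=-1/\beta$ by the compatibility $1+\beta(-1/\beta)=0$. Writing $\sigma:=\lambda_1^{k_1}\lambda_2^{k_1}$ and $\tau:=\beta^2\mu_1^{k_2}\mu_2^{k_2}$, the conditions $\det A=\sigma$ and $\det B=\mu_1^{k_2}\mu_2^{k_2}$ translate into the pair of equations
\[
a(1-a)-bc=\sigma, \qquad c=\tau-\sigma,
\]
so $c$ is forced and then $a(1-a)-bc=\sigma$ must be solved.

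The main subtlety is the degenerate case $c=0$, i.e.\ $\sigma=\tau$. Here the equation becomes $a(1-a)=\sigma$, and by Vieta's formulas (using $\lambda_1^{k_1}+\lambda_2^{k_1}=1$) its roots are exactly $\lambda_1^{k_1}$ and $\lambda_2^{k_1}$, both in $\mathbb K$; I would then set $a=\lambda_1^{k_1}$ with any $b \in \mathbb K$. In this case $B$ is upper-triangular with diagonal $-\lambda_i^{k_1}/\beta$, and a quick comparison of characteristic polynomials shows $\{-\lambda_1^{k_1}/\beta,-\lambda_2^{k_1}/\beta\}=\{\mu_1^{k_2},\mu_2^{k_2}\}$, since sums and products match. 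When $c\neq 0$, any choice of $a\in\mathbb K$ (say $a=0$) works, with $b=(a(1-a)-\sigma)/c$ determined linearly. In both cases, $A$ and $B$ have the prescribed distinct eigenvalues in $\mathbb K$, making them diagonalizable over $\mathbb K$, and the argument of the first paragraph completes the factorization.
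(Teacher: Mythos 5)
Your proof is correct and proceeds along essentially the same route as the paper: write $J_{0,2}=A+\beta B$ with $\operatorname{tr}(A)=1$ and $\operatorname{tr}(B)=-1/\beta$, then choose the free entries so that $\det A=\lambda_1^{k_1}\lambda_2^{k_1}$ and $\det B=\mu_1^{k_2}\mu_2^{k_2}$; regularity of the two solutions makes both $A$ and $B$ have distinct eigenvalues in $\mathbb K$, hence diagonalizable over $\mathbb K$, hence $k_1$\textsuperscript{st} and $k_2$\textsuperscript{nd} powers respectively. The one genuine difference is that the paper pins the $(1,1)$ entry of $A$ to $0$, taking $A=\left(\begin{smallmatrix}0&1+x\\ y&1\end{smallmatrix}\right)$ with only $x,y$ free, and then explicitly treats only the degeneration in which some $\mu_i=0$. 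Your version keeps the $(1,1)$ entry $a$ as an extra free parameter and sets $B=\tfrac{1}{\beta}(J_{0,2}-A)$. That extra freedom lets you dispose cleanly of the genuinely awkward case $c=\tau-\sigma=0$ (i.e.\ $\lambda_1^{k_1}\lambda_2^{k_1}=\beta^2\mu_1^{k_2}\mu_2^{k_2}$): in the paper's rigid parametrization this forces $y=0$ and hence $\det A=0$, which is inconsistent unless some $\mu_i$ vanishes, a case not ruled out by the hypotheses; your triangular construction with $a=\lambda_1^{k_1}$ covers it. So your argument is a bit more robust and, if anything, tightens a small lacuna. One phrasing nit: ``by the same change-of-basis matrices'' should be read per matrix --- $X=P\,\diag(\lambda_1,\lambda_2)\,P^{-1}$ where $P$ diagonalizes $A$, and $Y=Q\,\diag(\mu_1,\mu_2)\,Q^{-1}$ where $Q$ (in general different from $P$) diagonalizes $B$.
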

\begin{proof}
Let $(\lambda_1,\lambda_2)$ be a regular solution of $X_1^{k_1}+X_2^{k_1}=-\frac{1}{\beta}$. We write 
$$\begin{pmatrix}0&1\\0&0\end{pmatrix} = \begin{pmatrix}0&0\\y&1\end{pmatrix} + \beta \begin{pmatrix}0&\frac{1}{\beta}\\-\frac{y}{\beta}&-\frac{1}{\beta}\end{pmatrix}.$$
Now, the first matrix on the right is an idempotent and hence is $k_1$-th power of itself. For the second matrix to conjugate to $\diag(\lambda_1^{k_2}, \lambda_2^{k_2})$, we need $\lambda_1^{k_2}+\lambda_2^{k_2} = -\frac{1}{\beta}$ and $\lambda_1^{k_2}\lambda_2^{k_2}=\frac{y}{\beta}$. Determining $y$ gives us the desired result.
\end{proof}
\section{Image of diagonal polynomial over $\mathbb C$ and $\mathbb F_q$}\label{diagonal-fieldC}
We use the results obtained in the previous section to obtain some surjectivity results over particular fields for the diagonal word map. 

\subsection{Diagonal polynomial over $\mathbb C$}
We demonstrate an application of our earlier results. Note that this result will still hold for any algebraically closed field of characteristic $0$, but we choose to state it for $\mathbb C$.
\begin{theorem}\label{diagonal-form-C}
Let $\mathbb K=\mathbb C$ and $k_1, k_2 \geq 1$ be integers and $\beta$ be a non-zero element in $\mathbb C$. Then, the map $\omega \colon M_n(\mathbb C)_s \times  M_n(\mathbb C)_s \rightarrow M_n(\mathbb C)$ given by $\omega(x_1,x_2)= x_1^{k_1} + \beta x_2^{k_2}$, where $M_n(\mathbb C)_s$ is the set of semisimple matrices, is surjective.  
\end{theorem}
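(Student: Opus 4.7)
The plan is to apply Proposition~\ref{key-prop} to reduce to Jordan blocks, then invoke the structural results of Section~\ref{diagonal-word}. Since $\mathbb{C}$ is algebraically closed, every $A \in M_n(\mathbb{C})$ is similar to a direct sum $\bigoplus_i J_{\alpha_i, \ell_i}$ with $\alpha_i \in \mathbb{C}$, and the block-diagonal remark preceding Proposition~\ref{key-prop} reduces surjectivity to producing each single Jordan block $J_{\alpha, \ell}$ as $X^{k_1} + \beta Y^{k_2}$ with $X, Y$ diagonalizable. All three tools I will use (Lemma~\ref{lemma-nonzero-alpha}, Theorem~\ref{nilpotent-n2}, Theorem~\ref{nilpotent-n3}) in fact produce diagonalizable $X$ and $Y$, which is exactly what is needed for the restriction of $\omega$ to $M_n(\mathbb{C})_s \times M_n(\mathbb{C})_s$.

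For $J_{\alpha, \ell}$ with $\alpha \neq 0$, I would apply Lemma~\ref{lemma-nonzero-alpha}. Its hypothesis asks for two solutions $(a,b),(c,d) \in \mathbb{C}^2$ of $X^{k_1} + \beta Y^{k_2} = \alpha$ with $a^{k_1} \neq c^{k_1}$ and $b^{k_2} \neq d^{k_2}$. Pick any two distinct complex numbers $u \neq v$; extract $k_1$-th roots to obtain $a, c$ with $a^{k_1} = u$, $c^{k_1} = v$, and then $k_2$-th roots of $(\alpha - u)/\beta$ and $(\alpha - v)/\beta$ to obtain $b, d$. The inequality $b^{k_2} \neq d^{k_2}$ follows automatically since $\beta \neq 0$ and $u \neq v$.

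For $J_{0, \ell}$, I split by size. The case $\ell = 1$ is trivial. For $\ell = 2$ I appeal to Theorem~\ref{nilpotent-n2}, and for $\ell \geq 3$ to Theorem~\ref{nilpotent-n3}. Both theorems require regular $\mathbb{C}$-solutions to equations of the form $X_1^{k} + \cdots + X_m^{k} = \gamma$ with $\gamma \in \{1, -1/\beta\}$, and Theorem~\ref{nilpotent-n3} additionally requires a \emph{non-zero} regular solution for its second equation. Over $\mathbb{C}$ one fixes $m-1$ generic complex numbers whose $k$-th powers are pairwise distinct (and nonzero when required), then chooses the remaining variable as any $k$-th root of $\gamma$ minus their power sum; a small perturbation of the free variables on a dense Zariski open locus ensures both pairwise distinctness of all $k$-th powers and non-vanishing.

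The only delicate step is verifying that such regular solutions actually exist, but this is routine over the algebraically closed infinite field $\mathbb{C}$: the set of tuples violating regularity or non-vanishing forms a proper Zariski-closed subset of a positive-dimensional complex variety, so generic choices succeed. Assembling the three cases, every Jordan block $J_{\alpha, \ell}$ lies in the image of $\omega$, and the block-diagonal reduction then yields $\omega(M_n(\mathbb{C})_s \times M_n(\mathbb{C})_s) = M_n(\mathbb{C})$.
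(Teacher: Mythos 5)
Your proof is correct and follows the same overall strategy as the paper: reduce to Jordan blocks via Proposition~\ref{key-prop} and handle invertible blocks $J_{\alpha,\ell}$ ($\alpha\neq 0$) with Lemma~\ref{lemma-nonzero-alpha}, exactly as the paper does. The only divergence is in the nilpotent case: the paper's primary route is Remark~\ref{remark-nilpotent}, which observes that the same Lemma~\ref{lemma-nonzero-alpha} argument works at $\alpha=0$ (over $\mathbb{C}$ one can pick $a,c$ with $a^{k_1}\neq c^{k_1}$ and solve $\beta Y^{k_2}=-a^{k_1}$, $\beta Y^{k_2}=-c^{k_1}$, and $b^{k_2}\neq d^{k_2}$ then follows automatically), which handles every $J_{0,\ell}$ uniformly; the detour you take through Theorems~\ref{nilpotent-n2} and~\ref{nilpotent-n3} is explicitly flagged in the paper as the alternative route, and your Zariski-genericity justification of the regular-solution hypotheses over $\mathbb{C}$ is sound. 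So the proposal is valid but takes the slightly longer of the two nilpotent-case options the paper offers; invoking the remark would shorten it and avoid the case split on $\ell$.
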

\begin{proof}
Let $A$ be in $M_n(\mathbb C)$. Then, the Jordan canonical form of $A$ is the direct sum of the Jordan matrices $J_{\alpha, l}$ where $\alpha \in \mathbb C$. Now, we look at the equation $X^{k_1} + \beta Y^{k_2} = \alpha$ over $\mathbb C$. Take any $a,c \in \mathbb C$ such that $a^{k_1}\neq c^{k_1}$ and consider the equations $\beta Y^{k_2}=\alpha-a^{k_1}$ and $\beta Y^{k_2}=\alpha-c^{k_1}$. We can easily find solutions required in the Lemma~\ref{lemma-nonzero-alpha}, thus $J_{\alpha, l}$ is in the image. The same argument works for Jordan nilpotent matrices in the view of remark~\ref{remark-nilpotent} (or alternatively we can use Theorem~\ref{nilpotent-n3}, \ref{nilpotent-n2} for $\alpha=0$ case). This proves the required result.
\end{proof}
\begin{corollary}
Let $m\geq 2$, and $\omega \colon M_n(\mathbb C)^m \rightarrow M_n(\mathbb C)$ given by $\omega(x_1, x_2, \ldots,x_m)=\delta_1 x_1^{k_1} + \delta_2 x_2^{k_2} + \cdots +\delta_m x_m^{k_m}$. Then, $\omega(M_n(\mathbb C)_s)=M_n(\mathbb C)$. Thus, $\omega$ is surjective. 
\end{corollary}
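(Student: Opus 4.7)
The plan is to reduce immediately to the two-variable case already handled by Theorem~\ref{diagonal-form-C}. Given any target $A\in M_n(\mathbb C)$, set $x_3=x_4=\cdots=x_m=0$; since the zero matrix is diagonalisable, each of these choices lies in $M_n(\mathbb C)_s$. With these substitutions the word $\omega$ collapses to $\delta_1 x_1^{k_1}+\delta_2 x_2^{k_2}$, so it is enough to represent every $A\in M_n(\mathbb C)$ in this form with $x_1,x_2\in M_n(\mathbb C)_s$.

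To match the hypotheses of Theorem~\ref{diagonal-form-C}, I would divide through by $\delta_1$: the desired equation becomes $x_1^{k_1}+\beta x_2^{k_2}=A/\delta_1$, where $\beta=\delta_2/\delta_1\in\mathbb C^*$. Theorem~\ref{diagonal-form-C} produces semisimple $x_1,x_2$ solving precisely this equation, and multiplying back by $\delta_1$ recovers $A=\delta_1 x_1^{k_1}+\delta_2 x_2^{k_2}$ as required.

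There is no real obstacle here; the only point to notice is that $0\in M_n(\mathbb C)_s$, which is what allows the extra variables to be zeroed out without leaving the semisimple stratum, and that dividing by the nonzero scalar $\delta_1$ is harmless over $\mathbb C$. Consequently $\omega\bigl(M_n(\mathbb C)_s^{\,m}\bigr)=M_n(\mathbb C)$ for every $m\geq 2$, and in particular $\omega$ itself is surjective.
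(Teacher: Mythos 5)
Your proof is correct and is exactly the routine deduction the paper leaves unstated: set the extra variables to the zero matrix (which is semisimple), normalize by the nonzero scalar $\delta_1$, and invoke Theorem~\ref{diagonal-form-C} for the two-variable case. Nothing further is needed.
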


\subsection{Over finite field}

The result in this section is a generalisation of that in~\cite{KishoreSingh2022}. The proof is along similar lines too thus we keep it short.

The proof relies on having enough solutions of the equation $X^{k_1}+\beta Y^{k_2} = c$ over the field $\mathbb F_q$, for large enough $q$. The solution of polynomial equations over finite fields has a long history with some fundamental results such as the Chevalley-Warning theorem and Lang-Weil bound etc. We begin with some of these results regarding the number of solutions which will be used in the main proof. We recall a version of Lang-Weil theorem \cite[Theorem 5A]{Schmidt2004}.
\begin{theorem}\label{Lang-Weil}
Consider the polynomial equation $\delta_1X_1^{k_1} + \cdots + \delta_mX_m^{k_m} =1$ where $\delta_i\in \mathbb F_q^*$ and $k_i>0$ for all $i$. Then the number of solutions $S$ of this equation in $\mathbb F_q^m$ satisfies 
$$|S-q^{m-1}| \leq k_1k_2\cdots k_m q^{\frac{m-1}{2}} \left(1-\frac{1}{q}\right)^{-m/2}.$$
\end{theorem}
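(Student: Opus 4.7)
The statement is Schmidt's Theorem 5A cited verbatim, and the authors invoke it as a black box. Nonetheless, I sketch how one would prove it from scratch via the standard additive-character and Gauss-sum machinery. Fix a non-trivial additive character $\psi \colon \mathbb{F}_q \to \mathbb{C}^*$. By orthogonality of additive characters, the number of solutions can be written as
\begin{equation*}
S \;=\; \frac{1}{q} \sum_{t \in \mathbb{F}_q} \psi(-t) \prod_{i=1}^{m} T_i(t), \qquad T_i(t) \;:=\; \sum_{x \in \mathbb{F}_q} \psi(t\delta_i x^{k_i}).
\end{equation*}
The $t = 0$ summand contributes exactly $q^{m-1}$, the expected main term; the remaining work is to estimate the sum over $t \neq 0$, which is precisely $S - q^{m-1}$.

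For $t \neq 0$, I would open each $T_i(t)$ in multiplicative characters of $\mathbb{F}_q^*$ satisfying $\chi^{k_i} = \chi_0$, giving
\begin{equation*}
T_i(t) \;=\; \sum_{\substack{\chi^{k_i} = \chi_0 \\ \chi \neq \chi_0}} \bar{\chi}(t\delta_i)\, \tau(\chi),
\end{equation*}
where $\tau(\chi)$ is the Gauss sum of $\chi$, of modulus $\sqrt q$ for $\chi$ non-trivial. A crude triangle inequality only yields $|T_i(t)| \leq (k_i - 1)\sqrt q$, which, summed over $t \neq 0$, gives an error of order $q^{m/2}$, weaker than claimed. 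The sharper $q^{(m-1)/2}$ bound is obtained by reorganizing the expanded product $\prod_i T_i(t)$ together with the $t$-sum into Jacobi sums $J(\chi_1, \ldots, \chi_m) = \sum_{a_1 + \cdots + a_m = 1} \chi_1(a_1) \cdots \chi_m(a_m)$, which satisfy $|J| \leq q^{(m-1)/2}$ either directly by Weil's theorem, or inductively from the Gauss-sum product identity $J(\chi_1,\dots,\chi_m) = \tau(\chi_1)\cdots\tau(\chi_m)/\tau(\chi_1\cdots\chi_m)$ in the generic case.

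The main obstacle, and really the only subtle point, is tracking the precise numerical constants, in particular the correction factor $(1 - 1/q)^{-m/2}$. This factor arises from reconciling character sums over $\mathbb{F}_q$ with those over $\mathbb{F}_q^*$, and accounting for it case-by-case through the various trivial-versus-non-trivial combinations of the $\chi_i$ is exactly the bookkeeping carried out in~\cite[Theorem 5A]{Schmidt2004}. Since we only need the bound as a black box for the application in Theorem~\ref{theoremB}(2), the cleanest course in the paper is to cite Schmidt directly rather than reproduce the full accounting; a modern alternative would be to apply Deligne's Weil-conjecture bound to the smooth projective homogenization of the diagonal hypersurface, correcting separately for the points at infinity.
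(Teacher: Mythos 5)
The paper does not prove this statement; it is quoted verbatim from Schmidt \cite[Theorem 5A]{Schmidt2004} and used as a black box, which you correctly identify. Your sketch of the underlying proof via additive characters, expansion of each $T_i(t)$ in multiplicative characters with trivial $k_i$-th power, and the resulting Jacobi sums is the standard (Weil/Schmidt) route, and the substance of it is accurate: the $t=0$ term gives $q^{m-1}$, the naive term-by-term bound on $\prod_i T_i(t)$ loses a factor $\sqrt q$, and regrouping the $t$-sum into Jacobi sums recovers the $q^{(m-1)/2}$ exponent. One small clarification you could have made explicit: a clean Jacobi-sum count already gives $|S-q^{m-1}|\le\prod_i(d_i-1)\,q^{(m-1)/2}$ with $d_i=\gcd(k_i,q-1)$, which is \emph{stronger} than the displayed inequality; the factor $(1-1/q)^{-m/2}$ in Schmidt's formulation is slack introduced for uniformity of statement rather than a genuine obstruction, so your phrase ``the only subtle point'' overstates its role slightly. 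None of this affects the paper, which only needs the bound qualitatively and cites it as you recommend.
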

\noindent The next lemma is along a similar line as Proposition A.3~\cite{KishoreSingh2022}.
\begin{lemma}\label{lemma1}
For $k_1 \geq k_2\geq 2$ and $\alpha, \beta \in \mathbb F_q^\times$, consider the polynomial $$F(X_1,X_2)=X_1^{k_1}+ \beta X_2^{k_2} -\alpha.$$ 
Then, for $q > k_1^4 k_2^4$, there exists solutions $(a,b)$ and $(c,d)$ to $F(X_1, X_2)=0$ such that $a^{k_1}\neq c^{k_1}$ and $b^{k_2}\neq d^{k_2}$.
\end{lemma}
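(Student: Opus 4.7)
The plan is to apply the Lang-Weil bound of Theorem~\ref{Lang-Weil} to produce enough $\mathbb F_q$-solutions of $F(X_1,X_2)=0$ and then extract two of them with distinct values of the first coordinate's $k_1$-th power. A useful preliminary observation is that if $(a,b),(c,d)$ both satisfy $F=0$ and $a^{k_1}\ne c^{k_1}$, then subtracting the two defining relations yields $\beta(b^{k_2}-d^{k_2})=c^{k_1}-a^{k_1}\ne 0$, so $b^{k_2}\ne d^{k_2}$ comes for free since $\beta\in\mathbb F_q^\times$. Thus I only need to produce two solutions whose first-coordinate $k_1$-th powers differ.

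To invoke Lang-Weil I would rescale $F=0$ as $\alpha^{-1}X_1^{k_1}+\alpha^{-1}\beta X_2^{k_2}=1$, which is in the normalized form of Theorem~\ref{Lang-Weil} with coefficients $\delta_1,\delta_2\in\mathbb F_q^\times$. This gives $|N-q|\le k_1k_2\sqrt q\,(1-1/q)^{-1}$, where $N$ denotes the number of $\mathbb F_q$-solutions of $F=0$, and for $q\ge 2$ one has $(1-1/q)^{-1}\le 2$, hence $N\ge q-2k_1k_2\sqrt q$. On the other hand, suppose for contradiction that every solution shares the same value $v$ of the first coordinate's $k_1$-th power. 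Then there are at most $k_1$ choices of $X_1$ satisfying $X_1^{k_1}=v$, and for each such $X_1$ the equation $\beta X_2^{k_2}=\alpha-v$ has at most $k_2$ roots; this gives the trivial upper bound $N\le k_1k_2$.

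It therefore remains to check that the Lang-Weil lower bound beats this trivial upper bound under the hypothesis $q>k_1^4k_2^4$. Since $k_1,k_2\ge 2$ forces $k_1k_2\ge 4$, we have $\sqrt q>k_1^2k_2^2\ge 4k_1k_2$, from which $q-2k_1k_2\sqrt q=\sqrt q(\sqrt q-2k_1k_2)>\sqrt q\cdot 2k_1k_2>8k_1^2k_2^2>k_1k_2$, a contradiction. Thus at least two solutions $(a,b),(c,d)$ with $a^{k_1}\ne c^{k_1}$ exist, and by the opening observation these automatically satisfy $b^{k_2}\ne d^{k_2}$. The only potential obstacle is to be careful with the shape of the Lang-Weil error term and the rescaling to the normalized equation; the elementary inequality at the end has substantial slack built into the hypothesis $q>k_1^4k_2^4$.
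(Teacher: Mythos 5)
Your proof is correct and follows the same basic strategy as the paper's: invoke the Lang-Weil bound to show $F=0$ has more than $k_1k_2$ solutions when $q>k_1^4k_2^4$, which forces two solutions with distinct $X_1^{k_1}$-values. Your opening observation that $a^{k_1}\neq c^{k_1}$ automatically forces $b^{k_2}\neq d^{k_2}$ (by subtracting the two relations and using $\beta\in\mathbb F_q^\times$) is a genuine simplification over the paper, which instead treats the two failure modes separately and aims for the looser target $S\geq k_1^2+k_2^2+1$; your version cleanly reduces the question to a single degenerate case with the tight count $k_1k_2$.
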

\begin{proof}
By Theorem~\ref{Lang-Weil}, we have the following inequality about the number of solutions $S$ of the equation $F(X_1, X_2)=0$ and $m=2$,
 $$|S-q| \leq k_1k_2\sqrt{q}\left(\frac{q}{q-1}\right).$$
Observe that $\frac{q}{q-1} \leq 2\leq k_1$. Therefore, we have $|S - q| \leq k_1^2k_2\sqrt{q}$. Suppose $(a, b)$ and $(c, d)$ are solutions of $F(X_1, X_2)=0$. If $a^{k_1}=c^{k_1}$, then $F(X_1, X_2) = 0$ has at most $k_2^2$ solutions as $(a, \zeta_{k_2}b)$ and $(c, \zeta_{k_2}d)$, where $\zeta_{k_2}$ refers to a root of unity if it exists, are also the possibility for solutions. Similarly, for $b^{k_2}=d^{k_2}$, there are at most $k_1^2$ solutions possible. So, we need to have 
$$S \geq q- k_1^2k_2 \sqrt{q}  \geq  k_1^2+k_2^2+1$$ 
i.e., we want $\sqrt{q}(\sqrt{q} - k_1^2k_2)\geq k_1^2 + k_2^2 + 1$.
For this to be satisfied, it suffices to have $\sqrt{q} > k_1^2k_2^2$. In that case, we get $\sqrt{q}(\sqrt{q} - k_1^2k_2) > \sqrt{q} > k_1^2k_2^2 \geq 4k_1^2 \geq k_1^2+k_2^2+1$ as $k_2\geq 2.$
\end{proof}
\begin{corollary}\label{cor-diag-invert}
Let $k_1, k_2 \geq 1$ be integers and $\alpha\in \mathbb F_q^*$. Then, there exist a constant $\mathcal N_1$ (depending on $k_1$ and $k_2$ only) such that for all $q > \mathcal N_1$, the matrix $J_{\alpha, n}\in M_n(\mathbb F_q)$ can be written as $B^{k_1}+\beta C^{k_2}$ for some $B, C \in M_n(\mathbb F_q)$ both diagonalisable.
\end{corollary}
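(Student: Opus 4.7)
The plan is to chain together the two results that immediately precede this corollary. Lemma~\ref{lemma1} supplies, for any $q>k_1^4k_2^4$ and any $\alpha\in\mathbb F_q^*$, two solutions $(a,b)$ and $(c,d)$ of the scalar equation $X^{k_1}+\beta Y^{k_2}=\alpha$ in $\mathbb F_q^2$ with the regularity conditions $a^{k_1}\neq c^{k_1}$ and $b^{k_2}\neq d^{k_2}$. Lemma~\ref{lemma-nonzero-alpha} then takes any such pair of solutions and constructs block-diagonal matrices $\mathcal G_n$ and $\mathcal H_n$ (splitting the cases $n$ even and $n$ odd) whose sum equals $J_{\alpha,n}$, and which are respectively similar to a $k_1$-power and to $\beta$ times a $k_2$-power of diagonal matrices; in particular both are diagonalisable.

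Concretely, the proof will proceed as follows. First, set $\mathcal N_1=k_1^4k_2^4$. Second, given any $q>\mathcal N_1$ and the specified $\alpha\in\mathbb F_q^*$, apply Lemma~\ref{lemma1} (with the roles of $\alpha,\beta$ as there) to extract the two regular solutions. Third, feed this data into Lemma~\ref{lemma-nonzero-alpha} to conclude that $J_{\alpha,n}=B^{k_1}+\beta C^{k_2}$ in $M_n(\mathbb F_q)$ with both $B$ and $C$ diagonalisable. No case analysis on $n$ is needed beyond what Lemma~\ref{lemma-nonzero-alpha} already handles internally, and the constant depends only on $k_1,k_2$ (not on $n$ or $\alpha$) because the bound in Lemma~\ref{lemma1} does.

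There is essentially no obstacle: both ingredients have been proved in their stated generality and the corollary is a direct composition of them. The only minor point worth flagging in the write-up is that $\mathcal N_1$ must be chosen to satisfy Lemma~\ref{lemma1}'s hypothesis uniformly in $\alpha\in\mathbb F_q^*$, which it does since that hypothesis is $q>k_1^4k_2^4$ independent of $\alpha$. I would therefore keep the proof to a few lines, just recording the choice of $\mathcal N_1$ and the two citations.
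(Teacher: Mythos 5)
Your proof matches the paper's own argument exactly: cite Lemma~\ref{lemma1} to produce the two regular scalar solutions for $q$ large (with $\mathcal N_1 = k_1^4k_2^4$), then invoke Lemma~\ref{lemma-nonzero-alpha} to assemble the block-diagonal witnesses $\mathcal G_n$ and $\mathcal H_n$. No differences worth noting.
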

\begin{proof}
Using Lemma~\ref{lemma1}, there exists a constant $\mathcal N_1$ (depending on $k_1, k_2$ only) such that for $q > \mathcal N_1$, and $\alpha\in \mathbb F_q^{\times}$, there exist solutions $(a,b), (c,d) \in \mathbb F_q^2$ such that $a^{k_1} \neq c^{k_1}$ and $b^{k_2}\neq d^{k_2}$, $\alpha=a^{k_1} +\beta b^{k_2}$ and $\alpha=c^{k_1}+\beta d^{k_2}$ for $q > \mathcal N_1$. Now we can simply use the Lemma~\ref{lemma-nonzero-alpha} to get the required solution. 
\end{proof} 
Now, we recall Proposition 2.3 from~\cite{Kishore2022} and Proposition A.2 from~\cite{KishoreSingh2022} which guarantees regular solutions to certain equations over $\mathbb F_q$.  
\begin{lemma}
Let $\gamma \in \mathbb F_q^*$ and $n \geq 2$ be an integer. Then, there exists a constant $\mathcal N_2$, depending on $k$ and $n$, such that for all $q  > \mathcal N_2$ the equation $X_1^k + X_2^k +\cdots + X_n^k =\gamma$ has a regular solution over $\mathbb F_q$. In fact, it always has a non-zero regular solution when $n\geq 3$.   
\end{lemma}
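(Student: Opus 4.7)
The plan is to apply the Lang--Weil bound of Theorem~\ref{Lang-Weil} twice: first to count all solutions of the diagonal equation $X_1^k+\cdots+X_n^k=\gamma$, and then to bound the count of bad solutions (those that are non-regular, or have a zero coordinate). Once $q$ is large enough, the main term $q^{n-1}$ dominates both error terms, forcing the existence of good solutions.

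First I would dehomogenize: dividing through by $\gamma\in\mathbb F_q^*$, the equation takes the form $\sum_{i=1}^n \gamma^{-1} X_i^k = 1$, so Theorem~\ref{Lang-Weil} with $m=n$ and $\delta_i=\gamma^{-1}$ gives a total solution count $S$ satisfying $S \geq q^{n-1} - C_1 k^n q^{(n-1)/2}$ for some absolute constant $C_1$ (valid for $q\geq 2$). Next I would bound the non-regular solutions. If $(\lambda_1,\ldots,\lambda_n)$ is a solution with $\lambda_i^k=\lambda_j^k$ for some $i<j$, then either $\lambda_i=\lambda_j=0$ or $\lambda_i=\zeta\lambda_j$ for some $k$-th root of unity $\zeta\in\mathbb F_q^*$; there are at most $\gcd(k,q-1)\leq k$ such $\zeta$. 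Substituting $\lambda_i=\zeta\lambda_j$ yields the equation
$$2\lambda_j^k + \sum_{\ell\neq i,j}\lambda_\ell^k = \gamma$$
in $n-1$ variables, whose solution count is at most $q^{n-2}+C_2 k^{n-1}q^{(n-2)/2}$ by a second application of Theorem~\ref{Lang-Weil} (in the generic case when $\operatorname{char}(\mathbb F_q)\neq 2$). Summing over the $\binom{n}{2}$ choices of pair and the $\leq k$ choices of $\zeta$, together with the single $\lambda_i=\lambda_j=0$ slice, the total number of non-regular solutions is $O(n^2 k\, q^{n-2})$.

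For the second, stronger assertion (non-zero regular, assuming $n\geq 3$), I would further subtract solutions with at least one $\lambda_i=0$. For each fixed $i$, setting $\lambda_i=0$ reduces to a diagonal equation in $n-1\geq 2$ variables, which has at most $q^{n-2}+C_3 k^{n-1}q^{(n-2)/2}$ solutions by Theorem~\ref{Lang-Weil} once more; summing over $i$ gives $O(n\, q^{n-2})$ additional bad solutions. Combining,
$$\#\{\text{non-zero regular solutions}\} \;\geq\; q^{n-1} - C_1 k^n q^{(n-1)/2} - C_4 n^2 k\, q^{n-2},$$
which is strictly positive once $q$ exceeds a threshold $\mathcal N_2$ depending only on $n$ and $k$. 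For $n=2$, one argues in the same way, but only the regularity condition is imposed (and in fact the count of non-regular pairs is only $O(k^2)$, so even the error from the first Lang--Weil estimate dominates).

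The main obstacle is uniformity across characteristics: when $\operatorname{char}(\mathbb F_q)=2$ (or, more generally, when a coefficient appearing after substitution vanishes modulo $p$), Theorem~\ref{Lang-Weil} cannot be applied directly to the reduced equation, because one $\delta_i$ becomes $0$ and the corresponding variable is free. In that case the bad-slice count becomes $q\cdot(q^{n-3}+O(k^{n-2}q^{(n-3)/2}))=q^{n-2}+O(k^{n-2}q^{(n-1)/2})$, of the same order as before, so only the implicit constant changes. Thus for every characteristic the bad-solution total is $O(n^2k\,q^{n-2})$, one full power of $q$ smaller than the main term $q^{n-1}$, and the asymptotic $q\to\infty$ argument produces the required threshold $\mathcal N_2$.
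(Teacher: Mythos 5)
The paper does not prove this lemma at all: it explicitly recalls the result from Proposition~2.3 of \cite{Kishore2022} and Proposition~A.2 of \cite{KishoreSingh2022}, so there is no in-paper proof to compare against. Your self-contained Lang--Weil argument is the natural route and is essentially sound: the total solution count $S$ has main term $q^{n-1}$, and the bad solutions (non-regular pairs, living on the subvarieties $\lambda_i^k=\lambda_j^k$, and zero-coordinate slices $\lambda_i=0$) are confined to hyperplane-type sections contributing $O(q^{n-2})$, so for $q$ large enough a non-zero regular solution survives. You correctly flag and handle the characteristic-$2$ subtlety where the coefficient $2$ in $2\lambda_j^k+\sum_{\ell\neq i,j}\lambda_\ell^k=\gamma$ vanishes, making one variable free; your fix (fiber over the free variable and apply Lang--Weil in $n-2$ variables) keeps the estimate at the same order. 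Two minor points of hygiene: first, the factor $(1-1/q)^{-n/2}$ in Theorem~\ref{Lang-Weil} is bounded by $2^{n/2}$ for $q\geq 2$, so your ``absolute constant $C_1$'' in fact depends on $n$ -- harmless, since $\mathcal N_2$ may depend on $n$, but worth stating. Second, for $n=2$ in characteristic $2$ the substituted equation reads $0=\gamma$ with $\gamma\neq 0$, so there are no nonzero non-regular pairs at all; this only helps your bound, but should be observed rather than silently folded into the generic count.
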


\begin{proposition}\label{prop-nilpotent-small}
Let $|\mathbb K| > 2$. For every integer $k_1\geq k_2\geq 1$, and $\beta\in \mathbb F_q^*$ there exists a constant $\mathcal N_3$, depending on $k_1, k_2$ and $n$ only, such that for all $q> \mathcal N_3$ the Jordan nilpotent matrix $J_{0,n}$ is in the image of $X^{k_1} + \beta Y^{k_2}$.   
\end{proposition}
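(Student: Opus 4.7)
The plan is to split on the size $n$ of the nilpotent block and combine the three existence results from Section~\ref{diagonal-word} with the preceding regular-solutions lemma over $\mathbb F_q$. For $n=1$ the claim is immediate, as $J_{0,1}=0=0^{k_1}+\beta\cdot 0^{k_2}$. For $n\geq 2k_1$, Theorem~\ref{theorem-large-nilpotent} already places $J_{0,n}$ in the image over any field with no constraint on $q$. Hence the actual work is concentrated in the finite window $2\leq n<2k_1$.

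Inside this window, I would apply Theorem~\ref{nilpotent-n2} when $n=2$ and Theorem~\ref{nilpotent-n3} when $3\leq n<2k_1$. Both require the same two ingredients over $\mathbb F_q$: a regular solution to $X_1^{k_1}+\cdots+X_n^{k_1}=1$, and (for $n\geq 3$) a non-zero regular solution to $X_1^{k_2}+\cdots+X_{n-1}^{k_2}=-\tfrac{1}{\beta}$; for $n=2$ only a regular solution to $X_1^{k_2}+X_2^{k_2}=-\tfrac{1}{\beta}$ is needed. The lemma recalled immediately before the proposition supplies each of these once $q$ exceeds some threshold that depends only on $k_1,k_2$ and the number of variables, which in turn depends only on $n$. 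Since the range $\{2,\ldots,2k_1-1\}$ is finite, the maximum $\mathcal N_3$ of these finitely many thresholds (and $2$, to enforce $|\mathbb F_q|>2$) depends only on $k_1,k_2,n$, which is the constant we output.

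The one subtle point is the case $n=3$, where Theorem~\ref{nilpotent-n3} demands a \emph{non-zero} regular solution to the two-variable equation $X_1^{k_2}+X_2^{k_2}=-\tfrac{1}{\beta}$, whereas the regular-solutions lemma as stated only promises non-zero regular solutions starting at three variables. Here a direct application of Theorem~\ref{Lang-Weil} to the two-variable equation yields $q-O(k_2^2\sqrt{q})$ solutions, of which at most $O(k_2^2)$ can have a zero coordinate and at most $O(k_2^2)$ can fail the regularity condition $X_1^{k_2}=X_2^{k_2}$; so for $q$ larger than a polynomial in $k_2$ a non-zero regular solution exists. This short Lang--Weil book-keeping is the only place where anything beyond the cited lemma is invoked; the rest of the argument is a direct assembly of the results of Section~\ref{diagonal-word}.
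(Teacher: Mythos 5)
Your proposal is correct and follows essentially the same strategy as the paper: invoke the regular-solutions lemma to verify the hypotheses of Theorems~\ref{nilpotent-n2} and~\ref{nilpotent-n3} and take the maximum of the finitely many resulting thresholds (which is exactly how $\mathcal N_3$ is assembled in the paper). Your remark about $n=3$ is a genuine, if minor, refinement over the paper's own write-up: the regular-solutions lemma as recalled only guarantees a \emph{non-zero} regular solution for equations in at least three variables, yet Theorem~\ref{nilpotent-n3} with $n=3$ needs a non-zero regular solution to the two-variable equation $X_1^{k_2}+X_2^{k_2}=-\tfrac{1}{\beta}$; the paper silently elides this, and your direct Lang--Weil count closes it cleanly --- concretely, at most $2k_2$ solutions have a zero coordinate and at most $k_2^2$ fail the regularity condition $X_1^{k_2}=X_2^{k_2}$, so any $q$ large enough relative to $k_2$ suffices. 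Splitting off the range $n\geq 2k_1$ via Theorem~\ref{theorem-large-nilpotent} is a harmless extra step (the paper lets Theorem~\ref{nilpotent-n3} plus the lemma absorb that range too, since $\mathcal N_3$ is allowed to depend on $n$), and it anticipates the decomposition later used in the proof of Theorem~\ref{diagonal-form-q}.
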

\begin{proof}
In view of Lemma above the required hypothesis of Theorem~\ref{nilpotent-n2} and~\ref{nilpotent-n3} are satisfied if $q > \mathcal N_3$. Note that $\mathcal N_3$ is the maximum of the constants required in the hypothesis of the referred Theorems for various choices of $k_1$ and $k_2$ for different $n$. Thus, we have the required result.  
\end{proof}
\noindent Now we are ready to prove the main result of this section,
\begin{theorem}\label{diagonal-form-q}
Let $k_1, k_2 \geq 1$ and $n\geq 2$ be integers and $\beta$ be a non-zero element in the finite field $\mathbb F_q$. Consider the map $\omega \colon M_n(\mathbb F_q) \times  M_n(\mathbb F_q) \rightarrow M_n(\mathbb F_q)$ given by $\omega(x_1,x_2)= x_1^{k_1} + \beta x_2^{k_2}$. Then, there exists a constant $\mathcal N(k_1, k_2)$ (which depends only on $k_1$ and $k_2$) such that for all $q > \mathcal N(k_1, k_2)$, the map $\omega$ is surjective.  
\end{theorem}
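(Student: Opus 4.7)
The plan is to apply the reduction principle of Proposition~\ref{key-prop}. We may assume $k_1 \geq k_2 \geq 2$, since if some $k_i = 1$ the equation $\omega(x_1, x_2) = A$ admits an explicit solution. Because $\mathbb{F}_q$ is perfect, every polynomial over $\mathbb{F}_q$ has separable irreducible factors, so every $A \in M_n(\mathbb{F}_q)$ has a separable characteristic polynomial in the sense of the paper. It therefore suffices to prove that for each eigenvalue $\alpha$ of $A$ over $\overline{\mathbb{F}_q}$ and each $l \geq 1$, the Jordan block $J_{\alpha, l}$ lies in the image of $\omega$ over the extension $M_l(\mathbb{F}_q(\alpha))$. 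A key observation is that $|\mathbb{F}_q(\alpha)| = q^{[\mathbb{F}_q(\alpha):\mathbb{F}_q]} \geq q$, so any lower bound imposed on $q$ propagates automatically to the relevant extension field.

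The argument then splits into two cases according to whether $\alpha$ is zero. For $\alpha \neq 0$, Corollary~\ref{cor-diag-invert} directly writes $J_{\alpha, l}$ in the form $B^{k_1} + \beta C^{k_2}$ as soon as $|\mathbb{F}_q(\alpha)| > \mathcal{N}_1(k_1, k_2)$. For $\alpha = 0$ and $l = 1$, the block is the zero matrix and admits the trivial decomposition $0^{k_1} + \beta \cdot 0^{k_2}$. For $\alpha = 0$ and $l \geq 2k_1$, Theorem~\ref{theorem-large-nilpotent} handles $J_{0, l}$ without any restriction on $q$. Finally, for each $l$ in the finite range $\{2, 3, \ldots, 2k_1 - 1\}$, Proposition~\ref{prop-nilpotent-small} covers $J_{0, l}$ provided $q$ exceeds a constant $\mathcal{N}_3(k_1, k_2, l)$. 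Setting $\mathcal{N}(k_1, k_2)$ to be the maximum of $\mathcal{N}_1(k_1, k_2)$ and these finitely many values $\mathcal{N}_3(k_1, k_2, l)$ yields the required uniform bound, and the reduction then assembles these block-by-block solutions into a preimage for $A$.

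The one point requiring genuine care is the claim that $\mathcal{N}(k_1, k_2)$ can be chosen independently of $n$. Since Proposition~\ref{prop-nilpotent-small} produces a constant that a priori depends on the nilpotent block size, one might fear that nilpotent Jordan blocks growing with $n$ would force a constant depending on $n$. This is precisely where Theorem~\ref{theorem-large-nilpotent} is essential: it disposes of all nilpotent blocks of size at least $2k_1$ in one stroke with no large-field hypothesis, so only finitely many block sizes, bounded purely in terms of $k_1$, still demand the large-$q$ hypothesis. Beyond this observation, the proof is essentially bookkeeping: no new estimates or constructions are needed beyond those already developed in Section~\ref{diagonal-word} and the Lang--Weil-based lemmas at the start of Section~\ref{diagonal-field}.
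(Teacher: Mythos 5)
Your proof is correct and follows essentially the same route as the paper's: reduce via Proposition~\ref{key-prop} to Jordan blocks over extension fields, handle $\alpha\neq 0$ by Corollary~\ref{cor-diag-invert}, large nilpotent blocks by Theorem~\ref{theorem-large-nilpotent}, small nilpotent blocks by Proposition~\ref{prop-nilpotent-small}, and take the maximum of the finitely many resulting constants. Your explicit remarks that $|\mathbb F_q(\alpha)|\geq q$ (so lower bounds on $q$ propagate to extensions) and that Theorem~\ref{theorem-large-nilpotent} is what keeps the bound $n$-independent are accurate and make the bookkeeping clearer than the paper's terse version, but the argument is the same.
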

\begin{proof}
In the view of Proposition~\ref{key-prop} the problem is reduced to dealing with $J_{\alpha, l}$ for all extensions of $\mathbb F_q$ where $l\leq n$. The case of $\alpha\neq 0$ is covered by Corollary~\ref{cor-diag-invert} for all $q > \mathcal N_1$ where $\mathcal N_1$ depends on $k_1$ and $k_2$ only. The case of $J_{0,l}$ for $l > 2k_1$ is covered by Theorem~\ref{theorem-large-nilpotent} which works for any $q$. For the case of $J_{0,l}$ with $l\leq 2k_1$ we use Proposition~\ref{prop-nilpotent-small} which works for $q > \mathcal N_2$ depending on $k_1, k_2$ and $l$ as well. Thus, if we take $q > \mathcal N$ where $\mathcal N$ is the maximum of $\mathcal N_1$ and various $\mathcal N_2$ for $l < 2k_1$ (which are finitely many) we get the result. Note that $\mathcal N$ depends on $k_1$ and $k_2$ only. 
\end{proof}
\begin{corollary}
For $m\geq 2$, there exists a constant $\mathcal N$ depending only on $k_1, \ldots, 
k_m$ such that for all $q > \mathcal N$ the map $\omega \colon M_n(\mathbb F_q)^m \rightarrow M_n(\mathbb F_q)$ given by $\omega(x_1, x_2, \ldots,x_m) \mapsto \delta_1 x_1^{k_1} + \delta_2 x_2^{k_2} + \cdots +\delta_m x_m^{k_m}$ is surjective.
\end{corollary}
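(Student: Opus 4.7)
The plan is to derive this corollary as a direct consequence of Theorem~\ref{diagonal-form-q} by collapsing the $m$-variable problem to the two-variable case already handled there. Given a target matrix $A \in M_n(\mathbb F_q)$, I would first set $x_3 = x_4 = \cdots = x_m = 0$, which reduces the equation $\omega(x_1, \ldots, x_m) = A$ to the two-variable equation $\delta_1 x_1^{k_1} + \delta_2 x_2^{k_2} = A$.

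Next, since $\delta_1 \in \mathbb F_q^*$, multiplying through by $\delta_1^{-1}$ produces the equivalent equation
\[
x_1^{k_1} + \beta\, x_2^{k_2} \;=\; \delta_1^{-1} A,
\]
where $\beta := \delta_2 \delta_1^{-1} \in \mathbb F_q^*$. By Theorem~\ref{diagonal-form-q}, there is a constant $\mathcal N(k_1, k_2)$ depending only on $k_1$ and $k_2$ such that for every $q > \mathcal N(k_1, k_2)$ the two-variable word map $(x_1, x_2) \mapsto x_1^{k_1} + \beta x_2^{k_2}$ is surjective on $M_n(\mathbb F_q)$. In particular, $\delta_1^{-1} A$ lies in its image, so we obtain $(x_1, x_2) \in M_n(\mathbb F_q)^2$ with $\omega(x_1, x_2, 0, \ldots, 0) = A$.

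Setting $\mathcal N := \mathcal N(k_1, k_2)$, which depends only on two of the exponents and hence \emph{a fortiori} only on $k_1, \ldots, k_m$, finishes the proof. There is no genuine obstacle here: the corollary is little more than a repackaging of Theorem~\ref{diagonal-form-q}, using the facts that the fibre $\{x_i = 0\}$ is available in each slot and that scaling by the unit $\delta_1^{-1}$ is a bijection of $M_n(\mathbb F_q)$. One can of course sharpen the bound by taking $\mathcal N = \min_{i \neq j} \mathcal N(k_i, k_j)$, since the same trick works after zeroing out any $m - 2$ of the variables, but the stated dependence on $k_1, \ldots, k_m$ is already achieved by the above reduction.
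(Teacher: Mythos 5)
Your proof is correct and is exactly the intended reduction: zero out $x_3,\ldots,x_m$, absorb $\delta_1$ by the bijection $A\mapsto\delta_1^{-1}A$ to reach the normalized form $x_1^{k_1}+\beta x_2^{k_2}$ with $\beta=\delta_2\delta_1^{-1}\in\mathbb F_q^*$, and invoke Theorem~\ref{diagonal-form-q}. The paper leaves this corollary without an explicit proof precisely because it is this straightforward repackaging; your observation that $\mathcal N=\min_{i\neq j}\mathcal N(k_i,k_j)$ also suffices is a harmless sharpening.
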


\section{Image of diagonal word over $\mathbb R$}\label{diagonal-fieldreal}

In this section, we consider the diagonal polynomials with coefficients in $\mathbb R$ and look at its image over $M_n(\mathbb R)$. Our main theorems in this section are as follows:
\begin{theorem}\label{diagonal-form-r1}
Let $\mathbb K = \mathbb R$, $k_1 \geq k_2 \geq k_3 \geq 1$ be integers and $\beta, \gamma$ be non-zero elements in $\mathbb R$. Then, the map $\omega \colon M_n(\mathbb R)^3 \rightarrow M_n(\mathbb R)$ given by $\omega(x_1, x_2, x_3)= x_1^{k_1} + \beta x_2^{k_2}+ \gamma x_3^{k_3}$ is surjective.
\end{theorem}
\begin{theorem}\label{diagonal-form-r}
Let $\mathbb K = \mathbb R$, $k_1 \geq k_2 \geq 1$ be integers and $\beta > 0$ in $\mathbb R$. Then, the map $\omega \colon M_n(\mathbb R) \times  M_n(\mathbb R) \rightarrow M_n(\mathbb R)$ given by $\omega(x_1,x_2)= x_1^{k_1} + \beta x_2^{k_2}$ is surjective if and only if one of the following holds 
\begin{enumerate}
\item[(i)] $n$ is even, 
\item[(ii)] $n$ is odd and one of the $k_1$ or $k_2$ is odd.
\end{enumerate}
Further, when $n$ is odd and $k_1, k_2$ both are even the image is $M_n(\mathbb R) \setminus \{\lambda I_n \mid \lambda < 0\}$. 
\end{theorem}
\noindent We may assume that all of the coefficients of the diagonal polynomial are positive. That is, we are dealing with $\delta_1 x_1^{k_1} + \delta_2 x_2^{k_2} + \cdots +\delta_m x_m^{k_m}$ where $\delta_i >0$ real for all $i$. Because for $x_1^{k_1} + \beta x_2^{k_2}$ with $\beta < 0$, following a similar argument as for $\mathbb C$ in Section~\ref{diagonal-form-C}, the equations required in the Theorem~\ref{nilpotent-n3}, \ref{nilpotent-n2} have solutions over $\mathbb R$, hence the map given by $x_1^{k_1} + \beta x_2^{k_2}$ would be surjective. In fact, without loss of generality, we may assume that $\delta_i = 1$ as $\delta_i >0 $ has a $k_i$-th root. Thus in what follows, we will be dealing with the map given by $x_1^{k_1} + x_2^{k_2} + \cdots + x_m^{k_m}$. The rest of the section is devoted to the proof of these statements. 

We begin by recalling a result from Richman (see Theorem 6 \cite{Richman1985}) which also uses the work of Griffin and Krusemeyer from~\cite{GK1977}:
\begin{theorem}[Richman, Griffin-Krusemeyer]
Let $k$ be a field with characteristic not equal to $2$ and $n$ be odd. Then, a scalar matrix $cI_n \in M_n(\mathbb K)$ is a sum of two squares if and only if $c$ is a sum of two squares in $\mathbb K$.
\end{theorem}
\noindent Thus in view of this, we have,
 \begin{corollary}\label{Richman-negative}
Let $n$ be odd and $k_1, k_2$ both even. Suppose $\beta > 0$ is a real number. Then, a scalar matrix $\lambda I_n \in M_n(\mathbb R)$ for $\lambda < 0$ can not be written as $A^{k_1}+ \beta B^{k_2}$ where $A, B\in M_n(\mathbb R)$. 
\end{corollary} 
\begin{proof}
If we can write $\lambda I_n \in M_n(\mathbb R)$ as $A^{k_1}+ \beta B^{k_2}$ then $\lambda I_n \in M_n(\mathbb R)$ is also a sum of two squares in $M_n(\mathbb R)$, and then by the above Theorem of Richman $\lambda$ is a sum of two squares. This is not possible for $\lambda < 0$.
\end{proof}
The rest of the proof is devoted to essentially showing that these are the only exceptions. The proof will be divided into three cases:
\begin{enumerate}
\item[Case 1:] When one of the $k_1$ or $k_2$ is odd.
\item[Case 2:] Both $k_1$ and $k_2$ are even and $n$ is even.
\item[Case 3:] Both $k_1$ and $k_2$ are even and $n$ is odd.
\end{enumerate}

\subsection{Case 1 when one of the $k_1$ or $k_2$ is odd:}\label{subsection-k1k2odd} 

The proof when $k_1$ or $k_2$ is odd is simpler. Let $A\in M_n(\mathbb R)$. Then, $A$ is conjugate to the direct sum of Jordan blocks 
\begin{enumerate}
\item $J_{\alpha, l}$ where $\alpha \geq 0$ in $\mathbb R$, 
\item $J_{\alpha, l}$ where $\alpha < 0 $ in $\mathbb R$, and 
\item $J_{p(x), l}$ where $p(x)$ is degree $2$ irreducible polynomial over $\mathbb R$. 
\end{enumerate}
Using Proposition~\ref{key-prop} we can realise $J_{p(x), l}$ of the kind $J_{\lambda ,l}$ for some $\lambda\in \mathbb C$ where we can use Theorem~\ref{diagonal-form-C} to prove the result. Thus, we need to deal with $J_{\alpha, l}$ where $\alpha\in \mathbb R$. We may assume $k_2$ is odd. Note that when $\alpha \neq 0$, we are done using Lemma~\ref{lemma-nonzero-alpha} as the equation $X^{k_1}+ Y^{k_2}=\alpha$ has required solutions (in the view of $k_2$ being odd). 

When $\alpha=0$ we can use Theorem~\ref{nilpotent-n3} and~\ref{nilpotent-n2} to get the result as we have solutions of required kind over $\mathbb R$ (once again in view of $k_2$ being odd).

\subsection{Case 2 when $n$ is even and $k_1, k_2$ both even:}\label{subsection-n-even}

In this case, we wish to show that the equation  $p(x_1, x_2) = x_1^{k_1} + x_2^{k_2} = A$ where $A$ is in $M_{n}(\mathbb R)$ and $n$ is even, always has a solution in $M_n(\mathbb R)$. Since the equation is closed under conjugation we may consider $A$ in its canonical form. The blocks appearing in the canonical form of $A$ will be as follows:
\begin{enumerate}
\item $J_{\alpha, 2m}$ where $\alpha \in \mathbb R$ and $m\geq 1$.
\item $J_{\alpha_1, 2m_1 -1}\oplus J_{\alpha_2, 2m_2 -1}$ for $\alpha_1, \alpha_2 \in \mathbb R$ and $m_1, m_2 \geq 1$ (because $n$ is even).
\item $J_{f(x), m}$ where $f$ is an irreducible polynomial of degree $2$ over $\mathbb R$, and the particular case $m=1$ refers to the $2\times 2$ companion matrix $\mathfrak C_f$ .
\end{enumerate}

Denote $\tau_m = \underbrace{\left(\tau \oplus \tau \oplus \cdots \oplus \tau \right)}_{m-times}$ where $\tau = \begin{pmatrix} \cos\frac{\pi}{k_1} & -\sin\frac{\pi}{k_1}\\ \sin\frac{\pi}{k_1} & \cos\frac{\pi}{k_1}
\end{pmatrix}$ then $\tau_m^{k_1} = -I_{2m}$. Thus, for a real number $\xi >0$, $\left(\xi^\frac{1}{k_1} \tau_m\right)^{k_1} = -\xi I_{2m}$. Also, we note that for $\xi >0$ and $\eta = \xi^{\frac{1}{k_2}}$, the Jordan matrix $J_{\eta, s}$ has the property that $\left(J_{\eta, s}\right)^{k_2}$ is conjugate to $J_{\xi, s}$. 

In view of the discussion above we need to deal with the three kinds of blocks. In each of these cases, we show that the matrix is in the image of $\omega(x_1, x_2)$.
\begin{enumerate}
\item When $\alpha > 0$, we know $J_{\alpha, 2m}$ has $k_1$-th root, so we are done. When $\alpha \leq 0$ pick $\psi > 0$ such that $\psi + \alpha > 0$, we write
$$J_{\alpha, 2m} = - \psi I_{2m} + J_{(\alpha +\psi), 2m}= \left(\psi^{\frac{1}{k_1}} T_m\right)^{k_1} + J_{(\alpha +\psi), 2m}.$$
Now, note that $J_{\alpha +\psi, 2m}$ is a $k_2$-th power as it is a conjugate of $\left(J_{\sqrt[k_2]{(\alpha +\psi)}, 2m}\right)^{k_2}$. 

\item In the case of $J_{\alpha_1, 2m_1 -1}\oplus J_{\alpha_2, 2m_2 -1}$ if both $\alpha_1, \alpha_2$ are positive it has $k_1$-th root. Else, we pick $\psi >0$ such that $\psi + \alpha_1 >0$ and $\psi +\alpha_2 >0$ and write 
$$J_{\alpha_1, 2m_1 -1} \oplus J_{\alpha_2, 2m_2 -1} = -\psi I_{2(m_1+m_2-1)} + (J_{\alpha_1 +\psi, 2m_1 -1}\oplus J_{\alpha_2 + \psi, 2m_2 -1}).$$
Now, $-\psi I_{2(m_1+m_2-1)}$ is a $k_1$-th root and $J_{\alpha_1 +\psi, 2m_1 -1}\oplus J_{\alpha_2 + \psi, 2m_2 -1}$ is a $k_2$-th root.
\item In this case we can use Lemma~\ref{lemma-nonzero-alpha} and go over the extension $\mathbb C$ where we can find the solution and realise it over $\mathbb R$.
\end{enumerate}

\subsection{Case 3 when $n$ is odd and $k_1, k_2$ both even:}\label{subsection-n-odd}

In this case, we wish to show that the equation  $p(x_1, x_2) = x_1^{k_1} + x_2^{k_2} = A$ where $A$ is in $M_{n}(\mathbb R)$ and $n$ is odd, has a solution in $M_n(\mathbb R)$ except when $A$ is a negative scalar matrix. The fact that negative scalars are not in the image follows from Corollary~\ref{Richman-negative}. Since the equation is closed under conjugation we can work with the canonical form of $A$. The canonical form of $A$ is a direct sum of the following:
\begin{enumerate}
\item $J_{\alpha_1, l_1} \oplus J_{\alpha_2, l_2}\oplus \cdots \oplus J_{\alpha_r, l_r}$ where $\alpha_i \in \mathbb R$ and $l_i$ even.
\item $J_{\alpha_1, l_1} \oplus J_{\alpha_2, l_2}\oplus \cdots \oplus J_{\alpha_s, l_s}$ where $\alpha_i \in \mathbb R$ and $l_i$ odd ($s$ must be odd as $n$ is odd).
\item $\oplus J_{f_i(x), m}$ where $f_i$ is an irreducible polynomial of degree $2$ over $\mathbb R$. The particular case $i=1, m=1$ refers to the $2\times 2$ companion matrix $\mathfrak C_f$ .
\end{enumerate}

We first prove some Lemma. 
\begin{lemma}\label{lemma-n-odd1} 
Let $m\geq 3$. Let $d_1, d_2, \ldots, d_m$ be non-negative reals. Consider the matrix  
$$M=\begin{pmatrix} -d_1 & 1 &&&& \\ &-d_2& 1& && \\ &&-d_3 & 1 &&\\ &&&\ddots&\ddots&\\ &&& &-d_{m-1} & 1 \\ -a_1 &-a_2 & \cdots&\cdots & -a_{m-1}& -(d_m +1)\end{pmatrix} \in M_m(\mathbb R).$$
Then, $a_j$s can be chosen in such a way that the matrix $M$ is $k$-th power of some diagonalizable matrix in $M_m(\mathbb R)$.
\end{lemma}
\begin{proof}
The characteristic polynomial of $M$ is given by $\chi_M(T)$
\begin{align*}
=& T^m + T^{m-1}\left(1+ \sum_{1\leq i \leq m}d_{i} \right) + T^{m-2} \left(\sum_{1\leq i_1 < i_2\leq m}d_{i_1} d_{i_2} + \sum_{1\leq i\leq m-1}d_{i}+a_{m-1} \right) +\\
& T^{m-3}\left(\sum_{1\leq i_1 < i_2 < i_3 \leq m} d_{i_1}d_{i_2}d_{i_3} + \sum_{1\leq i_1< i_2 \leq m-1} d_{i_1}d_{i_2} + a_{m-1}\left(\sum_{1\leq i \leq m-2}d_{i}\right) + a_{m-2}\right) +\\
& \cdots  + T\left(\sum_{1\leq i_1<i_2<i_3< \ldots <i_{m-1}\leq m} d_{i_1} d_{i_2}\ldots d_{i_{m-1}}+ \cdots + a_3\left(d_1 + d_2\right)+ a_2\right) +\\ 
& \left(d_1d_2\ldots d_{m-1}(d_m +1)+ d_1d_2\ldots d_{m-2}a_{m-1}+ \ldots +d_1a_2 + a_1 \right). 
\end{align*}
We claim that we can choose $\lambda_1,\ldots, \lambda_{m-2}$ positive reals and $\lambda_{m-1}, \lambda_m$, a pair of non-real complex conjugates such that 
$\chi_M(T)=(T-\lambda_1)\cdots (T-\lambda_{m-2})(T-\lambda_{m-1})(T-\lambda_m)$ and $\lambda_i \neq \lambda_j$ for all $i\neq j$. This will help us ensure that $M$ is conjugate to a regular semisimple element which is $k$-th power. For this, we need to solve the following system of equations,
\begin{equation*}
\begin{split}
\sum_{1\leq i \leq m}d_{i} + 1 & = - \mathcal E_1(\lambda_1, \lambda_2\ldots, \lambda_m),\\
\sum_{1\leq i_1 < i_2\leq m} d_{i_1}d_{i_2} + \sum_{1\leq I \leq m-1}d_{i} + a_{m-1} & = (-1)^2 \mathcal E_2(\lambda_1, \ldots, \lambda_{m-1},\lambda_m),
\end{split}
\end{equation*}
\begin{equation*}
\begin{split} 
\sum_{1\leq i_1 < i_2 < i_3 \leq m} d_{i_1}d_{i_2}d_{i_3} + \sum_{1\leq i_1< i_2\leq m-1}d_{i_1}d_{i_2} & + a_{m-1}\left(\sum_{1\leq i \leq m-2} d_{i}\right) + a_{m-2}\\  &= (-1)^3\mathcal E_3(\lambda_1, \ldots, \lambda_{m-1},\lambda_m),
\end{split}
\end{equation*}
\begin{equation*}
\begin{split}
\vdots\\
\sum_{1\leq i_1<i_2< i_3< \ldots <i_{m-1}\leq m} d_{i_1}d_{i_2}\ldots d_{i_{m-1}} + \ldots &+ a_3\left(d_1+d_2\right)+a_2 \\ & = (-1)^{m-1}\mathcal E_{m-1}(\lambda_1, \ldots,  \lambda_{m-1},\lambda_m),\\
d_1d_2\ldots d_m+ d_1d_2\ldots d_{m-1}+ \ldots +a_2d_1+a_1 &= (-1)^m \mathcal E(\lambda_1, \ldots, \lambda_{m-1},\lambda_m). \\
\end{split}
\end{equation*} 

Now, we simply pick $\lambda_1,\ldots, \lambda_{m-2}$ positive reals and $\lambda_{m-1}, \lambda_m$ a pair of non-real complex conjugate such that
$$-\sum_{1\leq i \leq m}d_{i} - 1  =  \mathcal E_1(\lambda_1, \ldots, \lambda_{m-2}, \lambda_{m-1},\lambda_m) = \sum_{i=1}^{m-2} \lambda_i + \lambda_{m-1}+\lambda_m $$ and $\lambda_i\neq \lambda_j$ for all $i\neq j$. This ensures each $\lambda_i$ for $1\leq i \leq m-2$ has a real $k$-th root. Further note that when $k$ is even the real part of $\lambda_m$ has to be negative and we will see that the requirement of $k$-th root forces a pair of complex conjugates. 

We note that since $\lambda_{m-1}$ and $\lambda_m$ are a pair of complex conjugate the $\mathcal E_j(\lambda_1, \ldots, \lambda_m)$ are in $\mathbb R$. Thus, the above equations determine the value of $a_j$ which are real numbers. This proves that we can choose $a_j$ in such a way that $M$ is conjugate to $\tilde M = diag(\lambda_1, \ldots, \lambda_{m-2})\oplus \mathfrak C_f$ where $\lambda_i$ are positive reals and $\mathfrak C_f$ is companion matrix of the real polynomial $T^2-(\lambda_{m-1}+\lambda_m)T+\lambda_{m-1}\lambda_m$. Clearly, $\tilde M$ has a $k$-th root in $M_m(\mathbb R)$ because scalars on the diagonal are positive and the $2\times 2$ block can be thought of as an element in $\mathbb C$. Hence $M$ is a $k$-th power.
\end{proof}

\noindent Next we have the following,
\begin{lemma}\label{lemma-n-odd2}
Let $n\geq 3$ and $d_1, \ldots, d_n$ be non-negative reals. Consider the matrix $T\in M_n(\mathbb R)$ as follows:
$$T=\begin{pmatrix} - d_1 &1&&&& \\ & - d_2&1& && \\ &&\ddots&\ddots&&\\ &&&\ddots&\ddots&\\ &&&& - d_{n-1}&1\\ &&&&& - d_n \end{pmatrix}.$$
Then, there exist matrices $B$ and $C$ in $M_n(\mathbb R)$ such that $T= B^{k_1} + C^{k_2}$.
\end{lemma}
\begin{proof} 
Let us consider a matrix $B = \begin{pmatrix} 0&0&\cdots&&0\\ 0&0&\cdots&&0 \\ a_1 & a_2&\cdots & a_{n-1} &1\end{pmatrix}$ and observe that $B^{k_1} = B$. Now, from Lemma~\ref{lemma-n-odd1} we can choose $a_1, \ldots, a_{n-1}$ such that the matrix $T-B$, which is in the required form, is $k_2$-th power, say $C^{k_2}$. Thus $T = B^{k_1} + C^{k_2}$.
\end{proof}
\begin{proposition}\label{prop-step1}
Let $n$ be odd and $A\in M_n(\mathbb R)$. Suppose the odd-size Jordan blocks appearing in the canonical form of $A$ are all of the size $\geq 3$. Then, $A$ is in the image of $x_1^{k_1} + x_2^{k_2}$. 
\end{proposition}
\begin{proof}
The Jordan blocks of the kind $J_{\alpha, 2l}$ and $J_{f(x), l}$ can be taken care of as in the Subsection~\ref{subsection-n-even} as they are of even size. For the Jordan blocks of the kind $J_{\alpha, 2l-1}$ we can use Lemma~\ref{lemma-n-odd2} as they are of size $\geq 3$. 
\end{proof}
\begin{lemma}\label{lemma-diagonal}
All diagonal matrices in $M_m(\mathbb R)$, when $m$ is even, and the diagonal matrices with at least $2$ distinct diagonals, when $m$ is odd, are in the image of  $x_1^{k_1} + x_2^{k_2}$.
\end{lemma}
\begin{proof}
When $m$ is even the diagonal matrices belonging to the image are covered in Section~\ref{subsection-n-even}. Now for $m$ odd, we are done if even one of the diagonal entries is positive. So, we may assume all of the diagonal entries are negative. We need to deal with $\diag(\lambda_1, \lambda_2, \lambda_3, \ldots)$ where $\lambda_i < 0$ for all $i$ and $\lambda_1 \neq \lambda_2$. Note that $\diag(\lambda_1, \lambda_2, \lambda_3, \ldots)$ is similar to $\begin{pmatrix} \lambda_1 & 1& & \\  & \lambda_2 & & \\ &  & \lambda_3 & \\ &&&\ddots\end{pmatrix}$ (because $\lambda_1 \neq \lambda_2$). 

Now, we consider the matrix $L = \begin{pmatrix}  0 & 0 \\ a_1 & a_0 \end{pmatrix}$ and note that 
$$ M:= \begin{pmatrix} \lambda_1 & 1\\  & \lambda_2 \end{pmatrix} - L^{k_1} = \begin{pmatrix} \lambda_1 & 1 \\ -a_1a_0^{k_1-1} & \lambda_2 - a_0^{k_1} \end{pmatrix}.$$
We claim that we can choose $a_0$ and $a_1$ such that the characteristic polynomial of the above matrix $M$ is $\chi_M(T) = (T+\lambda_1)(T-\mu)$ with $\mu \neq -\lambda_1$. For this we need to have $-\lambda_1 + \mu = \lambda_1 + \lambda_2-a_0^{k_1}$ and $-\lambda_1 \mu = \lambda_1(\lambda_2-a_0^{k_1})+ a_1a_0^{k_1-1}$. The first equation would require $a_0^{k_1} = 2\lambda_1+\lambda_2 -\mu$ which can be solved by choosing $\mu$ so that $2\lambda_1+\lambda_2 -\mu > 0$ (this means $\mu < 2\lambda_1+\lambda_2 < 0 \leq -\lambda_1$). The second equation gives $a_1$. Thus, we can make $M$ similar to $\diag(-\lambda_1, \mu)$ with $\mu \neq -\lambda_1$. Hence, 
$$\tilde M:= \begin{pmatrix} \lambda_1 & 1& & \\  & \lambda_2 & & \\ &  & \lambda_3 & \\ &&&\ddots\end{pmatrix} - \begin{pmatrix} L&&\\ &0& \\ &&\ddots \end{pmatrix}^{k_1} $$ is similar to $\diag(-\lambda_1, \mu, \lambda_3, \ldots)$ where $-\lambda_1 > 0$. Now, $\tilde M$ has one of the diagonal entries positive (and remaining part is even size) so it is a $k_2$-th power by the earlier argument. This completes the proof.  
\end{proof}

\begin{lemma}\label{lemma-even-oddsize}
Let $l>1$ and $\alpha, \xi \in \mathbb R$. The matrices $J_{\alpha, 2l}\oplus (\xi)$ and $J_{f, l}\oplus (\xi)$ are in the image of $x_1^{k_1} + x_2^{k_2}$.
\end{lemma}
\begin{proof}
First we deal with $J_{\alpha, 2l}\oplus (\xi)$. From the argument in Section~\ref{subsection-n-even} we know that $J_{\alpha, 2l}$ is in the image of $x_1^{k_1} + x_2^{k_2}$ and hence if $\xi > 0$ (it has roots) we are done. Thus, we may assume $\xi <0$. 

Write $m=2l$ for simplicity and consider $w_1, w_2, \ldots, w_m \in \mathbb R$ such that $w_1\neq 0$. Take $L$ to be a matrix with first $m-1$ rows $0$ and the last row to be $(w_m, w_{m-1}, \ldots, w_1)$. Then,  
$$J_{\alpha, m} - L^{k_1}= \begin{pmatrix} \alpha & 1 & 0 & \cdots & 0\\    0 & \alpha & 1 & \cdots & 0\\    \vdots & \vdots & \ddots&  \ddots & \vdots\\  \vdots &\cdots&\cdots&\alpha&1\\ -w_{m} w_1^{k_1-1} & -w_{m-1} w_1^{k_1-1} & \cdots & \cdots & \alpha-w_1^{k_1} \end{pmatrix}.
$$
We claim that we can choose $w_1, w_2, \ldots, w_m$ in such a  way that the characteristic polynomial of $J_{\alpha, m} - L^{k_1}$ is $(X + \xi)^{m-1}(X-\lambda)$ with $\lambda \neq -\xi$. Following a similar calculation as in the proof of Lemma~\ref{lemma-n-odd1}, we need to ensure $tr(J_{\alpha, m} - L^{k_1}) = -(m-1) \xi + \lambda$, that is, $m\alpha - w_1^{k_1} = -(m-1) \xi + \lambda$. Thus, we need to have a solution for $w_1^{k_1} = m\alpha +(m-1) \xi - \lambda$ which can be insured by choosing $\lambda < 0$, in fact take $\lambda < \xi <0$. This allows us that $J_{\alpha, m} - L^{k_1}$ is conjugate to $M_1 \oplus (\lambda)$ where only eigen values of $M_1$ is $-\xi$ which is positive. Thus, $M_1 = M^{k_2}$ for some $M$. Now, let us write $u^{k_1}= \xi - \lambda > 0$ and $\lambda = -v^{k_1}$. Then, $\begin{pmatrix} J_{\alpha, m} & \\ &\xi \end{pmatrix} - \begin{pmatrix} L & \\ & u \end{pmatrix}^{k_1} = \begin{pmatrix} J_{\alpha, m} -L^{k_1} & \\ &\xi -u^{k_1} \end{pmatrix} $ is conjugate to $\begin{pmatrix} M^{k_2} & & \\ &\lambda & \\ & &\lambda \end{pmatrix}$. Now all we need to show is that $\lambda I_2$ is a $k_2$-th power where $\lambda <0$. Equivalently, enough to show $-I_2$ is a $k_2$-th power. This can be done using $-I=\begin{pmatrix}
     \cos\frac{\pi}{k_2}& -\sin\frac{\pi}{k_2}\\
     \sin\frac{\pi}{k_2} & \cos\frac{\pi}{k_2}
 \end{pmatrix}^{k_2}$.

Now, let us deal with $J_{f,l}\oplus (\xi)$. Once again, from the argument in Section~\ref{subsection-n-even} we know that $J_{f, l}$ is in the image of $x_1^{k_1} + x_2^{k_2}$ and hence if $\xi > 0$ we are done. We need to deal with the case when $\xi <0$. 

First, we consider the case of $l=1$ with $f(x)=x^2 + b_0x +b_1$ and $\mathfrak C_f:=\begin{pmatrix} 0 & -b_1\\ 1& -b_0 \end{pmatrix} \in M_2(\mathbb R)$. Let $w_1,w_2$ be two real numbers with $w_1$ non-zero. Consider the matrix $L=\begin{pmatrix} 0&0\\ w_2 & w_1 \end{pmatrix}$. Then, 
$$ \begin{pmatrix} \mathfrak C_f & \\ & \xi \end{pmatrix} - \begin{pmatrix} L & \\ & u \end{pmatrix}^{k_1} = \begin{pmatrix} 0 & -b_1 & 0\\ 1-w_2w_1^{k_1-1} & -b_0-w_1^{k_1} & 0\\ 0 & 0 & \xi -u^{k_1} \end{pmatrix}$$ 
and we claim that with a choice of $w_1, w_2$ and $u$ we can make this a $k_2$-th power. Note that the characteristic polynomial of $\mathfrak C_f -L^{k_1}$ is $T^2 + (b_0 +w_1^{k_1})T + b_1(1-w_2w_1^{k_1-1})$. To make $\mathfrak C_f -L^{k_1}$ conjugate to a diagonal matrix $\diag(-\xi, \lambda)$ with $\lambda \neq -\xi$ we need to equate trace and determinant, i.e, $-\xi + \lambda = -b_0 - w_1^{k_1}$ and $-\xi\lambda = b_1(1-w_2w_1^{k_1-1})$. We fix, $\lambda < 0$ such that $\xi - \lambda > 0$ and $-b_0 + \xi -\lambda >0$, which ensures, solution for $u^{k_1} = \xi - \lambda > 0$ (to get $u$) and $w_1^{k_1} = -b_0 + u^{k_1}$. The second equation gives $w_2$ and with this choice of $u, w_1$ and $w_2$ we get:
$$ \begin{pmatrix}\mathfrak C_f & \\ & \xi \end{pmatrix} - \begin{pmatrix} L & \\ & u \end{pmatrix}^{k_1} = \begin{pmatrix} -\xi & & \\ & \lambda & \\  &  & \lambda \end{pmatrix}=:M_2.$$ 
Now, $-\xi >0$ which has $k_2$-th root and $\lambda <0$ we can use the earlier trick on $2\times 2$ block by using $k_2$-th root of $-I$ to get the job done.

Now for $l>1$, let $L$ be a size $2$ matrix as above for some $w_1$ and $w_2$. Consider 
$$ Q:=\begin{pmatrix} \mathfrak C_f & I  &  &&  & \\
 & \mathfrak C_f & I &  &  & \\  & & \ddots & \ddots & & \\
 &  &  & \mathfrak C_f & I & \\  &  &  & & \mathfrak C_f & \\
  &  &  & &  & \xi \end{pmatrix} - \begin{pmatrix}
0 & 0  & 0 &\hdots& 0 & 0\\  0 & 0 & 0 & \hdots & 0 & 0\\
0 & 0 & 0& \hdots & 0 & 0\\ \vdots & \vdots & \vdots & \vdots & \vdots & \vdots\\
0 & 0 & 0 &\hdots & L & 0\\ 0 & 0 & 0 & \hdots & 0 & u
\end{pmatrix}^{k_1}.$$
Then, the characteristic polynomial of $Q$ is $f(T)^{l-1}(T+ \xi)(T-\lambda)^2$ equal to the minimal polynomial and hence $Q$ is similar to $J_{f, l-1} \oplus M_2$. Since $J_{f,m-1}$ is $k_2-th$ power and $M_2$ is $k_2$-th power so is $Q$. This completes the proof.
\end{proof}

 \subsection{Proof of the Theorem}
 \begin{proof}[{\bf Proof of Theorem~\ref{diagonal-form-r}}]
When $n$ is even the proof follows from the argument in Section~\ref{subsection-k1k2odd} and~\ref{subsection-n-even}. 
Now, suppose $n \geq 3$ is odd and $k_1, k_2$ is both even. The negative real scalar matrices are not in the image, as follows from Corollary~\ref{Richman-negative}. It remains to show that every matrix in $M_n(\mathbb R)$ is in the image of $x_1^{k_1}+ x_2^{k_2}$ unless it is of the form $-\lambda I_{n}$ where $\lambda$ is a positive real. 

Let $A\in M_n(\mathbb R)$. From Proposition~\ref{prop-step1} if all odd-size Jordan blocks appearing in the canonical form of $A$ are of size $\geq 3$ then we are done. So, we may assume that the odd-size Jordan blocks are of size $1$. Since $n$ is odd there has to be at least one block of size $1$. Now we claim that if $A$ has at least $1$ even size Jordan block then $A$ is in the image. For this we can combine one even-size block with a size $1$ block and use Lemma~\ref{lemma-even-oddsize} and the remaining parts will be even-size blocks. Thus, we are left with the case when $A$ has no blocks of even size (and no blocks of odd size $\geq 3$). That is, $A$ is a diagonal matrix. Once again if $A$ has at least $2$ distinct entries on the diagonal we are done with Lemma~\ref{lemma-diagonal}. Thus, $A$ must be a scalar matrix of the form $\lambda I_n$. 

 \end{proof}

\begin{proof}[{\bf Proof of Theorem~\ref{diagonal-form-r1}}]
From the proof above all we need to show that $\lambda I_n$ when $\lambda < 0$ and $n$ odd is of the form $x_1^{k_1} + x_2^{k_2} + x_3^{k_3}$. For this we write $\lambda I_n = \diag(\lambda, \ldots, \lambda, 0) + \diag(0, \ldots, 0, \lambda)$. Here, the first one is $k_1$-th power using $\tau_{\frac{n-1}{2}}$ from Section~\ref{subsection-n-even}. For the second one again we use the argument from Section~\ref{subsection-n-even} on $\begin{pmatrix} 0&\\ & \lambda\end{pmatrix}$ and write it as a sum of $k_2$ and $k_3$ powers.
\end{proof}


\section{Images of diagonal polynomial over real quaternions}\label{section-quaternion}
In this section, we look at the diagonal polynomial over $M_n(\mathbb H)$ where $\mathbb H$ is Hamilton's real quaternion division algebra. We show that the map $\omega(x_1,x_2)=x_1^{k_1}+\beta x_2^{k_2}$ is surjective when $\beta\neq0$. This easily implies the surjectivity of the diagonal map for all $m\geq 2$. The result here is surprisingly easy to obtain due to the canonical form theory for matrices in $M_n(\mathbb H)$. We begin with the following result due to Wiegmann and Liping (See \cite[Theorem 1]{Wieg55}, also \cite[Lemma 3]{Li01}).
\begin{lemma}\label{lem:Conjugacy-over-quarternion}
Every $n \times n$ matrix with real quaternion elements is similar
under a matrix transformation with real quaternion elements to a matrix in (complex) Jordan normal form with diagonal elements of the form $a + bi$, $b \geq 0$. That is to say if $A\in M_n(\mathbb H)$, then $A$ is similar to a matrix of the form
\begin{align*}
J(A):=J_{\lambda_1,n_1}\oplus J_{\lambda_2,n_2}\oplus\ldots\oplus J_{\lambda_k,n_k},
\end{align*}
with $\lambda_k = a_k + ib_k\in\mathbb{C}$ being right eigenvalues of $A$. Furthermore, $b_k\in\mathbb{R} $ can be chosen to be non-negative. 
In this decomposition $J(A)$ is uniquely determined by $A$ up to the
order of Jordan blocks $J_{\lambda_k,n_k}$, and $J(A)$ is said to be the Jordan canonical form of $A$ corresponding to maximal subfield $\mathbb C$ of $\mathbb H$.
\end{lemma}
\noindent This Lemma reduces the problem to look for $A\in M_n(\mathbb H)$ as an image of diagonal polynomial to that of $A\in M_n(\mathbb C)$.
{\color{black} We call a matrix $A\in M_n(\mathbb H)$ to be invertible if there exists $B\in M_n(\mathbb H)$ such that $AB=BA=I$.
Note that any matrix $A\in M_n(\mathbb H)$ has finitely many conjugacy classes of left eigenvalues (i.e. $\alpha\in \mathbb H$ such that $A\cdot v=  \alpha\cdot v$ for some $v\in {\mathbb H}^n$). 
Since the number of conjugacy classes in $\mathbb H$ are infinite, given any matrix $A\in M_n(\mathbb H)$, there exists $\lambda\in \mathbb H$ such that $\lambda$ is not a left eigenvalue of $A$, and consequently 
$A-\lambda\cdot I$ is invertible (see \cite[Proposition 5.3.4]{Rodman14}).
Now we are ready to state and prove the final result of this article.
\begin{theorem}
   Let $0\neq \beta\in\mathbb H$. Then the map $\omega(x_1,x_2)=x_1^{k_1}+ \beta x_2^{k_2}$ is surjective on $M_n(\mathbb H)$. In particular, any matrix in $M_n(\mathbb H)$ can be written as a sum of two $k$-th powers.
\end{theorem}
\begin{proof}
If $A\in M_n(\mathbb H)$ is invertible it can be written as $X^{k_1}$ for some matrix $X$ in $M_n(\mathbb H)$, since it is so in $M_n(\mathbb C)$.
Hence $A$ is in the image of $\omega$, by setting $x_2$ to be the zero matrix.
Next, assume $A\in M_n(\mathbb H)$ is not invertible.
Choose $\lambda \in \mathbb H$ such that $\lambda^{k_2}$ does not belong to the set of left eigenvalues of $A$. 
Fix $\eta\in \mathbb H$ such that $\eta^{k_2}=\beta$. 
This can be done as $\eta$ can be conjugated to a complex number, thanks to Lemma~\ref{lem:Conjugacy-over-quarternion}.
Then for $x_2=(\lambda/\eta)\cdot I$, the matrix $A-\beta x^{k_2}$ is an invertible matrix (since $0$ is not a left eigenvalue of $A-\beta x^{k_2}$) and hence the map is surjective by the previous argument.
\end{proof}}
\color{black}
\printbibliography
\end{document}